\documentclass[twoside]{article}

\usepackage{amsmath}
\usepackage{amssymb}
\usepackage{amsthm}
\usepackage{amsfonts}
\usepackage{fancyhdr}
\usepackage{hyperref}
\usepackage{color}
\usepackage{enumitem}
\usepackage{todonotes}
\usepackage{mathabx} 
\usepackage{changes}
\usepackage{empheq}
\usepackage{mathrsfs}

\usepackage{mathtools}
 \mathtoolsset{showonlyrefs}

\theoremstyle{definition}
\newtheorem{Def}{Definition}[section]

\theoremstyle{plain}

\newtheorem{lem}[Def]{Lemma}
\newtheorem{teo}[Def]{Theorem}
\newtheorem{Propos}[Def]{Proposition}

\theoremstyle{remark}
\newtheorem{remark}{Remark}[section]
 
\newcommand{\orlnor}{\|_{L^{\phi}}}

\newcommand{\lphi}{L^{\phi}}

\newcommand{\wphi}{W^{1,\phi}}
\newcommand{\wphit}{W^{1,\phi}_T}

\newcommand{\rr}{\mathbb{R}}

\makeatletter
\newcommand{\labitem}[2]{%
\def\@itemlabel{\textnormal{\textbf{(#1)}}}
\item
\def\@currentlabel{\textnormal{\textbf{(#1)}}}\label{#2}}
\makeatother
\makeatletter
\def\namedlabel#1#2{\begingroup
    #2%
    \def\@currentlabel{#2}%
    \phantomsection\label{#1}\endgroup
}
\makeatother

\title{Existence of periodic solutions for Hamiltonian inclusion systems  using Clarke duality \thanks{   The research has been supported by SECyT-UNRC, grant C561,  FCEyN-UNLPam, grant PI 91. M. and ANPCyT PICT 2019- 3837
}}
\author{
Stefania M. Demaria and  Fernando D. Mazzone \\
Dpto. de Matem\'atica, Facultad de Ciencias Exactas, F\'{\i}sico-Qu\'{\i}micas y Naturales\\
Universidad Nacional de R\'{i}o Cuarto\\
(5800) R\'{\i}o Cuarto, C\'ordoba, Argentina,\\
\url{sdemaria@exa.unrc.edu.ar},
\url{fmazzone@exa.unrc.edu.ar} \\
}

\date{}

\begin{document}
\maketitle
\begin{abstract}We prove the existence of periodic solutions for Hamiltonian differential inclusions under growth conditions involving a $G$-function.
\end{abstract}

 \noindent\textit{keywords:} Hamiltonian, inclusions, periodic, $G$-function.

\section{Introduction and main results}

In this paper, we establish the existence of solutions to the following boundary value problem for a Hamiltonian  differential inclusion:

\begin{equation}\label{problemadualfinal}
\left\{ \begin{array}{lcc}
             u'(t)\in J\partial  \mathcal{H}(t,u(t)) \quad \text{for a.e. } t \\
             \\ u(0)=u(T), \\
             \end{array}
   \right.
\end{equation}
where $ \mathcal{H}:[0,T]\times \mathbb{R}^{2n}\to \mathbb{R}$ is a convex function with respect to the second variable, $\partial  \mathcal{H}(t,u)$ denotes the subdifferential in the sense of convex analysis (see \cite{clarke2013functional}), and $J$ denotes the canonical symplectic matrix.

\begin{equation*}
J=\begin{pmatrix}
0_{n\times n} & I_{n\times n} \\
-I_{n\times n} & 0_{n\times n}
\end{pmatrix}.
\end{equation*}

It is well known that many problems in Lagrangian mechanics can be reformulated as Hamiltonian problems.

Following Trudinger (see \cite{trudinger1974imbedding}), we say that $\phi: \mathbb{R}^d \to [0,+\infty]$ is a \emph{$G$-function} if it satisfies the following properties: $\phi$ is convex, $\lim_{|x|\to\infty}\phi(x)=\infty$, $\phi(0)=0$, $\phi(-x)=\phi(x)$, $\phi$ is bounded in some neighborhood of $0$, and $\phi$ is lower semicontinuous.

Associated with $\phi$, we have the \emph{complementary function} $\phi^{\star}$, which is defined for $\xi\in \mathbb{R}^d$ as
\begin{equation}\label{eq:conjugada}
\phi^{\star}(\xi)=\sup\limits_{x\in \mathbb{R}^d } \langle \xi, x\rangle-\phi(x),
\end{equation}
where $ \langle \cdot, \cdot\rangle : \mathbb{R}^d \times \mathbb{R}^d \to\mathbb{R}$ is the usual dot product.

 Some elementary and useful properties satisfied by $G$-functions are:

\begin{enumerate}
\item[\namedlabel{G:supra_adit}{(P1)}] if $0\leq \lambda_1,\lambda_2$, then $\phi\left((\lambda_1+\lambda_2) x\right)\geq\phi(\lambda_1 x)+\phi(\lambda_2 x);$
\item[\namedlabel{G:young-ine}{(P2)}] $x\cdot y\leq \phi(x)+\phi^{\star}(y)$ (Fenchel's inequality);
\item[\namedlabel{G:young-eq}{(P3)}] $y\in\partial\phi(x)\Longleftrightarrow x\cdot y= \phi(x)+\phi^{\star}(y)\Longleftrightarrow x\in\partial\phi^\star(y)$ (Fenchel's identity);
\item[\namedlabel{G:d(2x)}{(P4)}] if $\phi$ is Fréchet differentiable, then $\langle \nabla \phi( x),x\rangle \leq \phi(2x)$, for every $x\in\rr^d$;
\item[\namedlabel{G:|x|<=G}{(P5)}] there exists $C>0$ such that $C|x|\leq \phi(x)+1$, for every $x\in \mathbb{R}^d$.
\end{enumerate}

We say that a $G$-function $\phi:\mathbb{R}^d\rightarrow [0,+\infty)$ satisfies the \emph{$\Delta_2$-condition}, and we denote $\phi \in \Delta_2$, if there exists a constant $C>0$ such that
\begin{equation}\label{delta2defi}\phi(2x)\leq C \phi(x)+1,\quad x\in \mathbb{R}^d.
\end{equation}
Note that this definition is equivalent to the classical one, i.e., there exist $r_0, C > 0$ such that $\phi(2x) \leq C\phi(x)$ for $\|x\| > r_0$. When the inequality $\phi(2x)\leq C \phi(x)$ is satisfied for every $x$, we say that $\phi$ satisfies the \emph{$\Delta_2$-condition globally}. We denote this by $\phi\in \Delta_2^G$.

The \emph{Orlicz space} $\lphi=L^{\phi}\left([0,T],\mathbb{R}^d\right)$ is defined by
\begin{equation}\label{espacioOrlicz}
\lphi:=\big\{ u\big| u \text{ is Bochner measurable and } \exists \lambda>0:  \int_0^T\phi(\lambda u) < \infty   \big\}.
\end{equation}

The Orlicz space $\lphi$, equipped with the \emph{Luxemburg norm}
\[
\|  u  \orlnor:=\inf \left\{ \lambda\bigg| \int_0^T\phi\left(\frac{u}{\lambda}\right) \,dt\leq 1\right\},
\]
is a Banach space.

It follows easily from the definition and elementary properties of $G$-functions that
\begin{equation}\label{eq:desi_norm_mod}
 \|  u  \orlnor\leq  \int_0^T\phi\left(u\right)dt+1.
\end{equation}

We define the \emph{Orlicz-Sobolev space} $\wphi=\wphi\left([0,T],\mathbb{R}^d\right)$ by
\[\wphi:=\big\{u| u \text{ is absolutely continuous, }  u'\in \lphi\big\}.\]

The space $\wphi$ is a Banach space when it is equipped with the norm
\begin{equation}\label{def-norma-orlicz-sob}
\|  u  \|_{\wphi}= \|  u  \|_{\lphi} + \|u'\orlnor.
\end{equation}
The subspace $\wphit$ of $\wphi$ is defined by
\[\wphit:=\wphi\cap \big\{u| u(0)=u(T)\big\}.\]
Note that $\wphit$ is a closed subspace of $\wphi$.

Following \cite{acinas2019clarke}, we introduce the following notion.
\begin{Def}\label{defsimplectica}
We say that a $G$-function $G:\mathbb{R}^{2n}\to [0,\infty)$ is \emph{symplectic} if $G^\star(Ju)=G(u)$ for all $u \in \mathbb{R}^{2n}$.
We say that it is \emph{semi-symplectic} if $G^\star(J\cdot)\prec G (\cdot)$, where $G_1 \prec G_2$ means that $\exists\hspace{0.1cm} K>0 \text{ and } C\geq 0$ such that, for all $u$, we have that $G_1(u)\leq G_2(Ku)+C.$
\end{Def}

In the literature (see \cite{meyer2017introduction}), the notions of \emph{symplectic matrix} and \emph{symplectic or canonical transformation} are more widely known. In Section \ref{sec:symplec}, we will recall these concepts and discuss the relationship between them and the notion of symplectic $G$-function introduced above.
In order to establish our main theorem, we require the following lemma, which will be proved in Section \ref{sec:lemas}.
\begin{lem}\label{lem:form_quad}
Let $G:\mathbb{R}^{2n}\to [0,\infty)$ be a semi-symplectic $G$-function. Then there exist positive constants $C_1, C_2$, depending only on $G$ and $T$, such that for every $u \in W^{1,G}_T([0,T], \mathbb{R}^{2n})$ we have

\begin{equation}\label{inec16}
 \int_0^T \left<Ju',u\right>dt\geq -C_1\int_0^T G(Tu')dt-C_2.
\end{equation}

\end{lem}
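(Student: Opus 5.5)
The plan is to reduce the quadratic form to the mean-zero part of $u$, bound it pointwise by Fenchel's inequality \ref{G:young-ine}, use semi-symplecticity to turn $G^\star(J\cdot)$ back into $G$, and then control $G$ of the oscillation of $u$ by a Jensen-type Wirtinger argument. A free scaling parameter absorbs the semi-symplectic constant $K$, so no $\Delta_2$ condition is needed.

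\textbf{Step 1: reduction to the mean-zero part.} If $\int_0^T G(Tu')\,dt=\infty$ there is nothing to prove, so assume it is finite. Write $u=\bar u+\tilde u$, where $\bar u=\frac1T\int_0^T u\,dt$ and $\tilde u$ has zero mean. Since $u(0)=u(T)$,
\[
\int_0^T\langle Ju',\bar u\rangle\,dt=\langle J(u(T)-u(0)),\bar u\rangle=0 .
\]
Because $J^{T}=-J$, we have $\langle Ja,b\rangle=-\langle a,Jb\rangle$. Hence
\[
-\int_0^T\langle Ju',u\rangle\,dt=\int_0^T\langle u',J\tilde u\rangle\,dt,
\]
and it suffices to bound this from above by $C_1\int_0^T G(Tu')\,dt+C_2$.

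\textbf{Step 2: Fenchel plus semi-symplecticity, with a scaling parameter.} Let $K,C$ be the constants in $G^\star(J\cdot)\prec G$. Fix $M\ge \max\{1,K/T\}$. For each $t$, write $\langle u',J\tilde u\rangle=M\langle Tu',J\tilde u/(TM)\rangle$ and apply \ref{G:young-ine}, then semi-symplecticity:
\[
\langle u',J\tilde u\rangle\le M\,G(Tu')+M\,G^\star\!\left(J\tfrac{\tilde u}{TM}\right)\le M\,G(Tu')+M\,G\!\left(\tfrac{K\tilde u}{TM}\right)+MC .
\]

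\textbf{Step 3 (the key step): a Wirtinger-type estimate in modular form.} Since $u$ is absolutely continuous,
\[
\tilde u(t)=\frac1T\int_0^T\big(u(t)-u(s)\big)\,ds=\frac1T\int_0^T\int_0^T\chi_{s,t}(r)\,u'(r)\,dr\,ds,
\]
where $\chi_{s,t}(r)\in\{0,\pm1\}$. Apply Jensen's inequality twice, once in $s$ and once in $r$, each with respect to the probability measure $dx/T$. Then use that $G$ is even with $G(0)=0$, so $G(\chi v)\le G(v)$ when $\chi\in\{0,\pm1\}$. This gives
\[
G\!\left(\tfrac{K\tilde u(t)}{TM}\right)\le \frac1T\int_0^T G\!\left(\tfrac{K}{M}u'(r)\right)dr .
\]
By the choice of $M$, $\lambda:=K/(MT)\le1$. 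Convexity and $G(0)=0$ give $G(\lambda x)\le \lambda G(x)\le G(x)$, so $G(\tfrac KM u')\le G(Tu')$. Therefore
\[
\int_0^T G\!\left(\tfrac{K\tilde u}{TM}\right)dt\le \int_0^T G(Tu')\,dt .
\]
This step is the only place where $K$ could cause trouble. Without $\Delta_2$ one cannot compare $G(Ku')$ with $G(Tu')$ for $K>T$, which is why the scaling $M$ is introduced in Step 2.

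\textbf{Step 4: conclusion.} Integrating the inequality of Step 2 over $[0,T]$ and inserting Step 3 gives
\[
-\int_0^T\langle Ju',u\rangle\,dt\le 2M\int_0^T G(Tu')\,dt+MCT .
\]
This is \eqref{inec16} with $C_1=2M$ and $C_2=MCT$, which depend only on $G$ (through $K$ and $C$) and on $T$.
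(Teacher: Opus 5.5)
Your proof is correct and follows essentially the same route as the paper's: reduce to $\tilde u$ using $u(0)=u(T)$, apply Fenchel's inequality with a prefactor pulled out, use semi-symplecticity to turn $G^\star(J\cdot)$ into $G$, and finish with the anisotropic Poincar\'e--Wirtinger inequality (Lemma \ref{lem:inclusion orliczII}), which you re-derive via Jensen instead of citing it. The only cosmetic difference is that the paper keeps $J$ on the derivative, writing $\int_0^T\langle Ju',u\rangle\,dt=\frac KT\int_0^T\langle \frac TK Ju',\tilde u\rangle\,dt$ so that semi-symplecticity gives exactly $G(Tu')+C$, whereas you move $J$ onto $\tilde u$; your $M$ plays the role of the paper's factor $K/T$, and taking $M=K/T$ (the constraint $M\ge 1$ is unnecessary) reproduces the paper's constant $C_1=2K/T$.
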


As is customary, we will use the decomposition $u=\overline{u}+\widetilde{u}$ for a function $u\in L^1([0,T],\rr^d)$, where $\overline{u} =\frac1T\int_0^T u(t)\, dt$ and $\widetilde{u}=u-\overline{u}$. The following proposition will be useful in the sequel; see \cite[Prop. 3.4]{demaria2025} for a proof.
\begin{Propos}\label{propos:noprmas_equi} The norm $\|u\|_{\wphi}':=|\overline{u}|+\|u'\|_{\lphi}$ is equivalent to $\|\cdot\|_{\wphi}$ on the space $\wphit$.
\end{Propos}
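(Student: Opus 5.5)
The plan is to prove the two inequalities $\|u\|'_{\wphi}\le A\|u\|_{\wphi}$ and $\|u\|_{\wphi}\le B\|u\|'_{\wphi}$ separately. Both rest on two elementary embeddings valid for any $G$-function $\phi$:
\[
L^\infty\hookrightarrow\lphi\hookrightarrow L^1 .
\]

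\textbf{The embedding $\lphi\hookrightarrow L^1$.} Let $\lambda=\|u\|_{\lphi}$. Then $\int_0^T\phi(u/\lambda)\,dt\le1$; this follows from lower semicontinuity and Fatou, or by taking $\lambda$ slightly larger. Property \ref{G:|x|<=G} gives
\[
C\int_0^T |u|/\lambda\,dt \le \int_0^T\bigl(\phi(u/\lambda)+1\bigr)dt\le 1+T,
\]
so $\|u\|_{L^1}\le \frac{1+T}{C}\|u\|_{\lphi}$.

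\textbf{The embedding $L^\infty\hookrightarrow\lphi$.} Since $\phi$ is bounded near $0$, there are $r,M>0$ with $\phi(x)\le M$ for $|x|\le r$. Convexity and $\phi(0)=0$ give $\phi(sx)\le s\phi(x)$ for $0\le s\le1$. Hence, if $|v(t)|\le \|v\|_\infty$ a.e. and $\lambda\ge \|v\|_\infty/r$, writing $v/\lambda=s\,(rv/\|v\|_\infty)$ with $s=\|v\|_\infty/(\lambda r)$ yields $\phi(v/\lambda)\le \frac{\|v\|_\infty}{\lambda r}M$. Choosing $\lambda=\|v\|_\infty\max(MT,1)/r$ makes $\int_0^T\phi(v/\lambda)\le1$, so
\[
\|v\|_{\lphi}\le K\|v\|_\infty,\qquad K=\max(MT,1)/r .
\]
In particular, a constant $c$ satisfies $\|c\|_{\lphi}\le K|c|$.

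\textbf{First inequality.} We have $|\overline u|\le \frac1T\|u\|_{L^1}\le \frac{1+T}{CT}\|u\|_{\lphi}$. Adding $\|u'\|_{\lphi}$ to both sides gives $\|u\|'_{\wphi}\le A\|u\|_{\wphi}$.

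\textbf{Second inequality.} Write $u=\overline u+\widetilde u$. Since $u$ is absolutely continuous,
\[
\widetilde u(t)=\frac1T\int_0^T\bigl(u(t)-u(s)\bigr)ds ,
\]
and therefore $|\widetilde u(t)|\le\int_0^T|u'|\,dt\le\frac{1+T}{C}\|u'\|_{\lphi}$ for all $t$. Using the triangle inequality and the $L^\infty\hookrightarrow\lphi$ bound on both pieces,
\[
\|u\|_{\lphi}\le \|\overline u\|_{\lphi}+\|\widetilde u\|_{\lphi}\le K|\overline u|+K\tfrac{1+T}{C}\|u'\|_{\lphi}.
\]
Adding $\|u'\|_{\lphi}$ gives $\|u\|_{\wphi}\le B\|u\|'_{\wphi}$.

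\textbf{Remarks.} The only delicate point is the $L^\infty\hookrightarrow\lphi$ step. There one must use that $\phi$ is bounded only near $0$ (it may take the value $+\infty$ elsewhere), and this is why the convexity scaling $\phi(sx)\le s\phi(x)$ is needed rather than a direct bound. Periodicity is not actually used, so the equivalence holds on all of $\wphi$ and, in particular, on $\wphit$.
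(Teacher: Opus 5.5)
Your proof is correct and complete. The two embeddings $L^{\phi}\hookrightarrow L^1$ (from (P5)) and $L^\infty\hookrightarrow L^{\phi}$ (from boundedness of $\phi$ near $0$ and $\phi(sx)\le s\phi(x)$) combine with the pointwise bound $|\widetilde u(t)|\le\|u'\|_{L^1}$ to give both inequalities. You are also right that periodicity plays no role. For the degenerate case $\|u\|_{L^{\phi}}=0$, use your ``take $\lambda$ slightly larger'' variant and let it decrease.

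The paper does not prove this statement itself but defers to \cite[Prop.~3.4]{demaria2025}. The natural comparison is therefore with the tool the paper uses for analogous estimates, the anisotropic Poincar\'e--Wirtinger inequality of Section~\ref{sec:lemas}. Applied to $u/(T\mu)$ with $\mu=\|u'\|_{L^{\phi}}$, it gives $\phi\bigl(\widetilde u(t)/(T\mu)\bigr)\le\frac1T\int_0^T\phi(u'/\mu)\,dr\le\frac1T$. Hence $\|\widetilde u\|_{L^{\phi}}\le T\|u'\|_{L^{\phi}}$ follows at once, as a modular estimate with a constant independent of $\phi$. Only the constant $\overline u$ then needs separate treatment. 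For that, it suffices that $c\mapsto\|c\|_{L^{\phi}}$ is a norm on the finite-dimensional space of constants, together with Jensen's inequality $\|\overline u\|_{L^{\phi}}\le\|u\|_{L^{\phi}}$ for the reverse direction.

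Your route avoids that lemma and is entirely elementary. The cost is that your constants depend on $\phi$ (namely $\max(MT,1)/r$ and $(1+T)/C$), since you pass through $L^1$ and $L^\infty$. The Poincar\'e--Wirtinger route gives the sharper, $\phi$-free bound on $\widetilde u$ that the paper exploits later, e.g.\ in Lemma~\ref{lem:form_quad} and Step~4. Since only equivalence of norms is claimed here, your version loses nothing.
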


We present our main theorem.

\begin{teo}\label{teominimoacciondual}
Let $G:\mathbb{R}^{2n}\to[0,\infty)$ be a differentiable $G$-function, with $G^\star$ semi-symplectic and $G^\star \in \Delta_2$. Suppose that $ \mathcal{H}:[0,T]\times \mathbb{R}^{2n}\to \mathbb{R}$ is such that $ \mathcal{H}(t,\cdot)$ is convex for almost every $t \in [0,T]$, $ \mathcal{H}(\cdot,u)$ is measurable for each fixed $u$, and the following conditions hold:
\begin{enumerate}
    \item There exists $ \xi \in L^{G^\star}([0,T], \mathbb{R}^{2n})$ such that for all $y \in \mathbb{R}^{2n}$ and for almost every $t \in [0,T]$
    \begin{equation}\label{eq:cota_inf_H}
     \mathcal{H}(t,y)\geq \left<\xi(t),y\right>.
    \end{equation}

    \item There exists $\Lambda$, with $\Lambda^{-1}>T \max\{1,C_1/2\}$, where $C_1$ is the constant obtained from inequality (\ref{inec16}), and $\alpha \in L^1([0,T], \mathbb{R})$ such that, for all $y \in \mathbb{R}^{2n}$ and for almost every $t \in [0,T]$, one has

     \begin{equation}\label{eq:cota_sup_H}
       \mathcal{H}(t,y)\leq G(\Lambda y) + \alpha(t).
     \end{equation}
    \item  $\int_0^T  \mathcal{H}(t,y)\,dt\to \infty \quad \text{as } |y|\to \infty.$
\end{enumerate}

Then there exists $u\in W^{1,G}_T([0,T],\mathbb{R}^{2n})$ which is a solution of the Problem
\eqref{problemadualfinal} and it holds that $v=-J\tilde{u}$ minimizes the dual action
\begin{equation}\label{eq:accion_dual}
    \chi(v)=\int_0^T \frac{1}{2}\left<Jv',v\right>+ \mathcal{H}^\star(t,v')\,dt.
\end{equation}

\end{teo}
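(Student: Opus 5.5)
The plan is the direct method applied to the dual action $\chi$ on $W^{1,G^\star}_T([0,T],\mathbb{R}^{2n})$ (the dual variable $v$ has derivative in $L^{G^\star}$), followed by the classical Clarke duality argument to recover a solution $u$.

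\textbf{Step 1: $\chi$ is well defined and coercive.} From \eqref{eq:cota_sup_H}, conjugation gives
\[
\mathcal{H}^\star(t,w)\geq G^\star\!\left(\tfrac{w}{\Lambda}\right)-\alpha(t).
\]
From \eqref{eq:cota_inf_H}, $\mathcal{H}^\star(t,w)=+\infty$ unless $w=\xi(t)$ in the limiting sense; more usefully, \eqref{eq:cota_inf_H} gives $\mathcal{H}^\star(t,\xi(t))\le 0$. Hence $\chi$ is proper: take $v$ with $v'=\xi-\overline{\xi}$. Then $\mathcal{H}^\star(t,v')$ is controlled by convexity together with $\mathcal{H}(t,\cdot)$ being finite, and condition 3 handles the mean. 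I would check this carefully, perhaps restricting to $v'$ with zero mean.

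For the quadratic term, apply Lemma \ref{lem:form_quad} to $G^\star$, which is semi-symplectic:
\[
\int_0^T\tfrac12\langle Jv',v\rangle\geq -\tfrac{C_1}{2}\int_0^T G^\star(Tv')-\tfrac{C_2}{2}.
\]
Write $Tv'=(T\Lambda)(v'/\Lambda)$ with $T\Lambda\max\{1,C_1/2\}<1$. By convexity and $G^\star(0)=0$,
\[
G^\star(Tv')\le T\Lambda\, G^\star(v'/\Lambda).
\]
Therefore
\[
\chi(v)\geq \bigl(1-\tfrac{C_1}{2}T\Lambda\bigr)\int_0^T G^\star(v'/\Lambda)-\|\alpha\|_1-C,
\]
and by \eqref{eq:desi_norm_mod} this controls $\|v'\|_{L^{G^\star}}$. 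The term $\int\langle Jv',v\rangle$ is invariant under adding constants to $v$, so I minimize over $v$ with $\overline{v}=0$. By Proposition \ref{propos:noprmas_equi}, $\chi$ is then coercive on $\{\overline v=0\}\subset W^{1,G^\star}_T$.

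\textbf{Step 2: existence of a minimizer.} The integral $\int\mathcal{H}^\star(t,v')$ is convex and lower semicontinuous; since $G^\star\in\Delta_2$, weak* compactness in $L^{G^\star}$ (dual of $E^{G}$) is available. The quadratic term is weakly continuous on minimizing sequences, because $v_k\to v$ uniformly (compact embedding into $C$) while $v_k'\rightharpoonup v'$ weakly*. Taking a bounded minimizing sequence and extracting a subsequence gives a minimizer $v$.

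\textbf{Step 3: Euler--Lagrange inclusion.} For $h\in W^{1,G^\star}_T$ with $\overline h=0$, the quadratic part has derivative
\[
\int_0^T\langle Jv',h\rangle\,dt
\]
(after integrating by parts). Because $\mathcal{H}^\star$ is only convex, I would use the one-sided variational inequality
\[
\int_0^T\langle Jv',h\rangle+\bigl(\mathcal{H}^\star(t,v'+h')-\mathcal{H}^\star(t,v')\bigr)\,dt\ge0
\]
along $v+sh$, $s\downarrow 0$, together with convexity. This shows that $v'$ minimizes
\[
w\mapsto\int_0^T\mathcal{H}^\star(t,w)+\langle Jv,w\rangle
\]
over zero-mean $w\in L^{G^\star}$, after integration by parts. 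The multiplier rule for the mean constraint then yields a constant $c$ with
\[
-Jv(t)+c\in\partial\mathcal{H}^\star(t,v'(t))\quad\text{a.e.}
\]
Justifying pointwise subdifferential inclusion from the integral minimization, via Rockafellar's theorem on conjugates of integral functionals in the Orlicz duality $L^{G^\star}$–$L^{G}$, is where $\Delta_2$ and the growth bound \eqref{eq:cota_sup_H} (which makes $\int\mathcal{H}(t,u)$ finite on $L^G$) enter. I expect this to be the main obstacle.

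\textbf{Step 4: recovering $u$.} Set $u=Jv+c$ up to sign conventions, i.e.\ $-Jv+c$. By \ref{G:young-eq}, $v'(t)\in\partial\mathcal{H}(t,u(t))$. Since $u'=-Jv'$, we get $u'\in -J\,\partial\mathcal{H}(t,u)$; with the correct orientation ($J^{-1}=-J$) this becomes $u'\in J\partial\mathcal{H}(t,u)$. Periodicity is inherited from $v$. Then $\tilde u=-Jv$, so $v=J\tilde u$ up to this sign bookkeeping, which I would match to the stated $v=-J\tilde u$. Finally, $u\in W^{1,G}_T$ because $v'\in L^{G^\star}$ and $u'=-Jv'$ lies in $L^{G}$ by the semi-symplectic relation $G^\star(J\cdot)\prec G$, read dually.
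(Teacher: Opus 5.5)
There is a genuine gap, and it sits exactly where hypothesis 3 must be used. Since $\mathcal{H}$ is bounded below only by the linear function $\langle\xi(t),y\rangle$, $\mathcal{H}^\star(t,\cdot)$ may be $+\infty$ on large sets. So neither properness of $\chi$ nor the Euler--Lagrange inclusion is automatic.

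\textbf{Properness.} Your argument does not work. Hypothesis 1 only gives $\mathcal{H}^\star(t,\xi(t))\le 0$; your claim that $\mathcal{H}^\star(t,w)=+\infty$ unless $w=\xi(t)$ is false. More importantly, nothing controls $\mathcal{H}^\star(t,\xi(t)-\overline{\xi})$. For instance, $\mathcal{H}(t,y)=|y|$ satisfies all hypotheses with $G=\frac12|\cdot|^2$, and $\mathcal{H}^\star(t,\cdot)$ is the indicator of the closed unit ball. Take an admissible $\xi$ equal to $e$ ($|e|=1$) on an interval of length less than $T/2$ and $-e$ elsewhere. Then $|\xi-\overline{\xi}|>1$ on that interval, so your $v$ has $\chi(v)=+\infty$.

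The missing idea is the paper's Step 3:
\begin{itemize}
\item take a minimizer $x_0$ of the coercive convex function $x\mapsto\int_0^T\mathcal{H}(t,x)\,dt$;
\item use Clarke's theorem on subdifferentials of integral functionals to get a measurable $\eta(t)\in\partial\mathcal{H}(t,x_0)$ with $\int_0^T\eta\,dt=0$;
\item check $\eta\in L^{G^\star}$ via Proposition \ref{cota1} and (P4);
\item use the periodic test function $w$ with $w'=\eta$, for which $\mathcal{H}^\star(t,\eta)=\langle\eta,x_0\rangle-\mathcal{H}(t,x_0)\in L^1$.
\end{itemize}

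\textbf{Euler--Lagrange step.} The step you flag as the main obstacle is not a technicality. Because $\chi$ may equal $+\infty$ arbitrarily close to its minimizer, Theorem \ref{teodifacciondual} is unavailable, since it needs the lower bound $G(\lambda y)-\beta\le\mathcal{H}$. Your one-sided inequality only shows that $v'$ minimizes $\int_0^T[\mathcal{H}^\star(t,w)-\langle Jv,w\rangle]\,dt$ over $\{w:\int_0^T w\,dt=0\}$.

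To produce the multiplier $c$ you need a constraint qualification, such as $0\in\mathrm{int}\{\int_0^T w\,dt:\int_0^T\mathcal{H}^\star(t,w)\,dt<\infty\}$. You do not verify it, and it again requires hypothesis 3. One way is to repeat the $x_0$ construction for the tilted functions $\int_0^T\mathcal{H}(t,x)\,dt-\langle z,x\rangle$ with $|z|$ small. Only after that does Rockafellar's interchange theorem on the decomposable space $L^{G^\star}$ give the pointwise inclusion.

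The paper sidesteps all of this by perturbing to $\mathcal{H}_\epsilon=\mathcal{H}+G(\epsilon\,\cdot)$, which has the two-sided bounds Theorem \ref{teodifacciondual} needs. It then proves $\epsilon$-uniform bounds on $v_\epsilon$ (using the test function $w$ above) and on $\overline{u}_\epsilon$ (using hypothesis 3 again), and passes to the limit.

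\textbf{Sign slip.} Integration by parts gives $\int_0^T\langle Jv',h\rangle\,dt=-\int_0^T\langle Jv,h'\rangle\,dt$. So the correct inclusion is $Jv+c\in\partial\mathcal{H}^\star(t,v')$. Setting $u=Jv+c$ then gives $u'=Jv'\in J\partial\mathcal{H}(t,u)$ and $v=-J\tilde u$ directly. Replacing $-J\partial\mathcal{H}$ by $J\partial\mathcal{H}$ by appeal to ``orientation'' is not a valid step.
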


  It should be noted that Theorem \ref{teominimoacciondual} is a generalization of \cite[Th. 5.1]{acinas2019clarke}. In the latter paper, the existence of periodic solutions for Hamiltonian differential systems of equations was established, under the assumption that the Hamiltonian function is controlled by means of a symplectic $G$-function.
\section{Symplectic $G$-functions}\label{sec:symplec}

In this paper, we need $G$-functions that interact with the symplectic structure (see \cite[Section 3.2]{meyer2017introduction}) of $\mathbb{R}^{2n}$. We recall the meaning of these terms.

\begin{Def}\label{def:espaciosimplectico}
A pair $(X,\omega)$, where $X$ is a Banach space and $\omega$ is a bilinear form on $X\times X$, is called a \emph{symplectic space} if $\omega$ is \emph{alternating} ($\omega[u,v]=-\omega[v,u]$) and \emph{non-degenerate} (i.e., if $\omega[u,v]=0$ for all $v$, then $u=0$).
\end{Def}

It can be shown that if $X$ is finite-dimensional and symplectic, then the dimension of $X$ is even. The standard symplectic structure on $\mathbb{R}^{2n}$ is given by the bilinear form
$$\omega(x,y)=\langle x,Jy\rangle.$$

\begin{Def}\label{def:matrizsimplectica} (see \cite[Ch. 6]{meyer2017introduction})
A matrix $A$ is said to be \emph{symplectic} if and only if $A^t J A = J$. The set of all symplectic matrices is denoted by $S_p(2n,\mathbb{R})$. A transformation $T$ of an open set $\Omega\subset \mathbb{R}^n$ into $\mathbb{R}^n$ is called a \emph{symplectic transformation} if its Jacobian matrix $DT$ is symplectic.
\end{Def}
The set $S_p(2n,\mathbb{R})$ forms a subgroup of the \emph{general linear group} $GL(2n,\mathbb{R})$, which is called the \emph{symplectic group}.

Symplectic transformations, also called canonical transformations, are important in mechanics because they preserve the Hamiltonian structure of the equations \cite[Th. 9.2]{betounes2009differential}.

We shall show the relationship between Definitions \ref{defsimplectica} and \ref{def:matrizsimplectica} in the particular case where $G$ is the \emph{quadratic function}
\begin{equation}\label{eq:cuadratica}
  G(x)=\frac{1}{2}\langle Ax, x \rangle,\quad \text{with } A\in \mathbb{R}^{2n\times 2n}.
\end{equation}

In this definition, we may assume $A=A^t$, since
$$\left \langle\frac{1}{4}\left(A^t+A\right) x, x \right\rangle= \frac12\langle Ax, x\rangle.$$
We note that $D G (x)= A x$ and $D^2G=A$. According to \ref{G:young-eq}, we have $G ^\star (y)= \langle y,x\rangle-G(x)$ where $y=D G(x)=Ax$. Therefore
\[G^\star(y)= \frac{1}{2}\langle A^{-1}y, y\rangle.\]

To ensure that $G$ is a $G$-function, one must have $G(x)>0$ for $x\neq 0$. Thus $A$ must be \emph{positive definite}\index{matrix!positive definite}, i.e., $\langle Ax, x\rangle > 0$ for $x \neq 0$. Since $D^2 G =A$, by \cite[Theorem C, p. 100]{ArthurWayneRoberts}, the positivity of $A$ is necessary and sufficient for the strict convexity of $G$. Moreover, the positivity of $A$ implies that $\exists \hspace{0.1cm} k>0$ such that $G(x)=\langle Ax, x\rangle \geq k \|x\|^2$, so $\frac{G(x)}{\|x\|}\to \infty$ as $\|x\|\to \infty$, and since $G(x)\leq \|A\| \|x\|^2$, we have $\frac{G(x)}{\|x\|}\to 0$ as $\|x\|\to 0$. As is well known, a function that satisfies these conditions is called an \emph{$N$-function} (see \cite{KR}). Every $N$-function is a $G$-function.

We now highlight the following result.

\begin{Propos}
Let $G$ be defined by \eqref{eq:cuadratica} with $A=A^t$ positive definite. Then the following are equivalent:
 \begin{enumerate}
  \item\label{it:sym1} $G$ is a symplectic $N$-function,
  \item $A\in S_p(2n,\mathbb{R}).$
 \end{enumerate}
\end{Propos}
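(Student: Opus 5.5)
The plan is to compute $G^\star(Ju)$ explicitly and compare it with $G(u)$. Since $A=A^t$ is positive definite, $G$ is an $N$-function, as observed just before the statement. So condition \ref{it:sym1} reduces to the identity $G^\star(Ju)=G(u)$ for all $u\in\mathbb{R}^{2n}$. The formula $G^\star(y)=\frac12\langle A^{-1}y,y\rangle$ derived above gives
\[
G^\star(Ju)=\tfrac12\langle A^{-1}Ju,Ju\rangle=\tfrac12\langle J^tA^{-1}Ju,u\rangle .
\]
Hence symplecticity of $G$ is equivalent to $\langle J^tA^{-1}Ju,u\rangle=\langle Au,u\rangle$ for all $u$.

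Both $J^tA^{-1}J$ and $A$ are symmetric, because $A^{-1}$ is symmetric. Polarization then shows that equality of the two quadratic forms is equivalent to the matrix identity $J^tA^{-1}J=A$.

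It remains to convert this into $A^tJA=J$. We use $J^t=J^{-1}=-J$ and $A^t=A$.
\begin{itemize}
\item Suppose $J^tA^{-1}J=A$. Then $A^{-1}=J A J^{t}=JAJ^{-1}$, so $A^{-1}J=JA$.
\item Multiplying $A^{-1}J=JA$ on the left by $A$ gives $J=AJA=A^tJA$, i.e.\ $A\in S_p(2n,\mathbb{R})$.
\item Conversely, suppose $AJA=J$. Then $JA=A^{-1}J$, so $A=J^{-1}A^{-1}J=J^tA^{-1}J$.
\end{itemize}
Every step is reversible, which yields the equivalence.

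The only place needing care is the passage from equality of quadratic forms to equality of matrices. This holds precisely because both matrices are symmetric, which is why the reduction $A=A^t$ made earlier matters; otherwise the argument is routine linear algebra.
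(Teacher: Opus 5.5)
Your proof is correct and follows essentially the same route as the paper. Both arguments reduce $G^\star(Ju)=G(u)$ to the vanishing of a quadratic form with a symmetric matrix, conclude that the matrix itself is zero, and then convert $A^{-1}=J^tAJ$ into $A^tJA=J$ using $A=A^t$ and $J^t=J^{-1}=-J$. The paper works with $B=A^{-1}-J^tAJ$ in the variable $v=Ju$, whereas your matrix $J^tA^{-1}J-A$ equals $J^tBJ$ and is written in the variable $u$.
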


\begin{proof}
Setting $v=Ju$ and $B=A^{-1}- J^t AJ$, we deduce

\begin{eqnarray*}
     G^\star(v)-G(u)&=& \frac{1}{2}\langle A^{-1}v,v\rangle - \frac{1}{2}\langle Au, u\rangle \\ &=&
    \frac{1}{2}\langle A^{-1}v,v\rangle -\frac{1}{2}\langle J^t AJJu, Ju\rangle
    \\&=&
    \frac{1}{2}\langle A^{-1}v,v\rangle -\frac{1}{2}\langle J^t AJv, v\rangle
     \\ &=&    \frac{1}{2}\langle \left(A^{-1}- J^t AJ\right)v, v\rangle
      = \frac{1}{2}\langle Bv, v\rangle.
\end{eqnarray*}

If $A\in S_p(2n,\mathbb{R}) $, then $A^tJ A =J$ and $A=A^t$ imply $A^{-1}= J^t A J$, which yields $ G^\star(Ju)=G(u)$, showing that $G$ is symplectic. Conversely, if $G$ is symplectic, then $ G^\star(v)-G(u)=0$, which implies $\langle Bv, v\rangle=0$ for all $v\in\mathbb{R}^{2n}$. This shows that $B$ is antisymmetric. Since $A$ is symmetric, so is $A^{-1}$; hence $B$ is symmetric as well. Thus $B=0$, so $A^{-1}= J^t AJ$ and $A\in S_p(2n,\mathbb{R})$.
\end{proof}

One may ask whether there exist positive definite, symmetric, and symplectic matrices $A$. By \cite[Prop. 1.1.1]{habermann2006introduction}, $S_p(2n,\mathbb{R})$ contains the subgroup
\[D(n)=\left\{ \begin{pmatrix}
C & 0 \\
0 & \left(C^t\right)^{-1}
\end{pmatrix}: C \in GL(n, \mathbb{R})\right\}.\]
We can find matrices $A\in D(n)$ with the desired properties. Since $A$ must be positive definite and symmetric, a suitable choice is
$$A= \begin{pmatrix}
C & 0 \\
0 & C^{-1}
\end{pmatrix}, $$
where $C=C^t$ and $C$ is positive definite. Then, for $u=(x,y)$,
\[G(u)=\frac{1}{2}\langle Au,u\rangle = \frac{1}{2}\left[\langle x, C x \rangle + \langle y, C^{-1} y \rangle\right],\]
which provides an example of a symplectic $N$-function.

Defining $\phi:\mathbb{R}^n\to [0,\infty)$ by $\phi(x)=\frac{1}{2}\langle x, C x \rangle$, we have $\phi^\star(y)=\frac{1}{2}\langle y, C^{-1} y \rangle$, so the previous example can be written as
\[
 G(u)=\phi(x)+\phi^\star(y).
\]
It is easy to see that for any $G$-function $\phi:\mathbb{R}^n\to [0,\infty)$, this construction yields an example of a symplectic $G$-function.
\begin{Propos}\label{incrustsimplec}
Let $G$ be semi-symplectic. Then $J$ induces an embedding of $L^G([0,T], \mathbb{R}^{2n})$ into $L^{G^\star}([0,T], \mathbb{R}^{2n})$.
More precisely, for any $K>0$ and $C\geq 0$ such that $G^\star(Jx)\leq G(Kx)+C$ for all $x\in \mathbb{R}^{2n}$, we have that, for every $u\in L^G$,
 \[\|Ju\|_{L^{G^\star}}\leq K(CT+1)\|u\|_{L^G}.\]
\end{Propos}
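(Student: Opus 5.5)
The plan is to estimate the modular integral of $G^\star$ applied to a rescaled $Ju$, and then turn that modular bound into a norm bound using the definition of the Luxemburg norm.

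First, note that $Ju$ is Bochner measurable. By homogeneity of both sides, it suffices to treat $u\neq 0$. Set $\lambda=\|u\|_{L^G}>0$. By the definition of the Luxemburg norm and the lower semicontinuity of $G$ (Fatou's lemma applied along a minimizing sequence $\lambda_k\downarrow\lambda$), we have $\int_0^T G(u/\lambda)\,dt\le 1$.

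Next, apply the semi-symplectic inequality $G^\star(Jx)\le G(Kx)+C$ pointwise with $x=u(t)/(K\lambda)$. Since $J$ is linear, this gives
\[
\int_0^T G^\star\!\left(\frac{Ju(t)}{K\lambda}\right)dt\le \int_0^T G\!\left(\frac{u(t)}{\lambda}\right)dt+CT\le 1+CT .
\]

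It remains to convert this into a norm estimate. Put $M=CT+1\ge 1$. By convexity of $G^\star$ and $G^\star(0)=0$ (which holds because $G^\star\ge 0$ and $G^\star(0)=-\inf G=0$), we have $G^\star(z/M)\le G^\star(z)/M$. Hence
\[
\int_0^T G^\star\!\left(\frac{Ju}{MK\lambda}\right)dt\le \frac1M\int_0^T G^\star\!\left(\frac{Ju}{K\lambda}\right)dt\le 1 .
\]
By the definition of the Luxemburg norm this yields $\|Ju\|_{L^{G^\star}}\le MK\lambda=K(CT+1)\|u\|_{L^G}$. In particular $Ju\in L^{G^\star}$, so $J$ maps $L^G$ into $L^{G^\star}$. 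Being linear, injective and bounded, it is a continuous embedding.

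The only delicate points are the attainment of the infimum in the Luxemburg norm, which I handle by Fatou's lemma using the lower semicontinuity of $G$, and the convexity scaling step. Both are routine, so no step is expected to be a serious obstacle.
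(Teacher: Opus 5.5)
Your proof is correct and follows essentially the same route as the paper: normalize by $\|u\|_{L^G}$, apply $G^\star(Jx)\le G(Kx)+C$ pointwise to $u/(K\|u\|_{L^G})$, and use convexity of $G^\star$ together with $G^\star(0)=0$ to absorb the factor $CT+1$. You also justify via lower semicontinuity and Fatou's lemma that the Luxemburg infimum is attained, which the paper uses implicitly when it writes $\int_0^T G(u)\,dt\le 1$ for $\|u\|_{L^G}=1$.
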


\begin{proof}
If $\|u\|_{L^{G}}=1$, then $\int_0^T G(u)\,dt\leq 1$ and
 \begin{eqnarray*}
 \int_0^T G^\star\left(\frac{Ju}{K(CT+1)}\right)dt
 &\leq& \frac{1}{CT+1} \int_0^T G^\star\left(\frac{Ju}{K}\right)dt\\
 &\leq& \frac{1}{CT+1} \left[ \int_0^T G(u)+ C \,dt\right]\leq 1,
 \end{eqnarray*}
from which the result follows in the case $\|u\|_{L^{G}}=1$. The general case is obtained by normalization.
\end{proof}

Let $G:\mathbb{R}^{2n}\to [0,\infty)$ be a semi-symplectic $G$-function. Then Proposition \ref{incrustsimplec}, together with Hölder's inequality
\begin{equation}\label{holder}
 \left|\int_0^T\langle v, u\rangle dt \right|\leq 2\|u\|_{L^G}\|v\|_{L^{G^\star}}, \quad  \text{for every } u\in L^G, v\in L^{G^\star},
\end{equation}
implies that the \emph{bilinear form}\index{bilinear form}
 \[ \omega[u,v]:= \int_0^T \left<Jv,u\right>\,dt,\]
is well defined on $L^G([0,T],\mathbb{R}^{2n})\times L^G([0,T],\mathbb{R}^{2n})$. Moreover, $\omega$ is alternating, that is, $\omega[u,v]=-\omega[v,u]$, and non-degenerate, i.e., if $\omega[u,v]=0$ for all $v$, then $u=0$. Hence, according to Definition \ref{def:espaciosimplectico}, $\left(L^G([0,T],\mathbb{R}^{2n}),\omega\right)$ is an infinite-dimensional symplectic space. Naturally, we shall call an \emph{Orlicz symplectic space}\index{space!Orlicz symplectic} any space of the form $L^G([0,T],\mathbb{R}^{2n})$ where $G$ is a semi-symplectic $G$-function.

\section{Some additional lemmas}\label{sec:lemas}

The following proposition was introduced in \cite{acinas2019clarke} for convex and differentiable Frechet functions. We present its generalization to convex functions. The proof follows the same lines as in \cite[Prop. 2.2]{acinas2019clarke}; we present it for completeness.

\begin{Propos}\label{cota1}
Let $ \mathcal{H}:X\to \mathbb{R}$ be a convex function. Assume that there exist a convex function $G: X \to \mathbb{R}$ and constants $\beta, \gamma >0$ such that
\begin{equation}\label{acot1}
-\beta \leq  \mathcal{H}(u)\leq G(u)+\gamma \quad \text{for all } u \in X.
\end{equation}
Then, for any $r>1$, the following holds: if $\xi \in \partial  \mathcal{H}(u)$, then
\[
G^\star(\xi)\leq \frac{1}{r-1}G(ru)+\frac{r}{r-1}(\beta + \gamma).
\]
\end{Propos}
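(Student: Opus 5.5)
Since $\xi\in\partial\mathcal{H}(u)$, the subgradient inequality $\mathcal{H}(v)\ge \mathcal{H}(u)+\langle \xi, v-u\rangle$ holds for all $v\in X$. The plan is to use this together with the lower bound $-\beta$ to control $\langle\xi,u\rangle$, then use the upper bound by $G$ to control $G^\star(\xi)$. Concretely, Fenchel's identity \ref{G:young-eq} for the convex function $\mathcal{H}$ gives
\[
\mathcal{H}^\star(\xi)=\langle \xi,u\rangle-\mathcal{H}(u).
\]
From the upper bound $\mathcal{H}\le G+\gamma$ we get $\mathcal{H}^\star\ge G^\star-\gamma$, since conjugation reverses inequalities. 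Hence
\[
G^\star(\xi)\le \langle\xi,u\rangle-\mathcal{H}(u)+\gamma\le \langle\xi,u\rangle+\beta+\gamma,
\]
where the second step uses $\mathcal{H}(u)\ge-\beta$.

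The remaining task is to bound $\langle\xi,u\rangle$. For $r>1$, apply Fenchel's inequality \ref{G:young-ine} to the pair $ru$ and $\xi$:
\[
r\langle\xi,u\rangle=\langle \xi, ru\rangle\le G(ru)+G^\star(\xi).
\]
This is valid for arbitrary convex $G$, since it follows directly from the definition \eqref{eq:conjugada}. Therefore $\langle\xi,u\rangle\le \frac1r G(ru)+\frac1r G^\star(\xi)$. Substituting into the previous display gives
\[
G^\star(\xi)\Bigl(1-\tfrac1r\Bigr)\le \tfrac1r G(ru)+\beta+\gamma.
\]
Multiplying by $\frac{r}{r-1}$ yields $G^\star(\xi)\le \frac{1}{r-1}G(ru)+\frac{r}{r-1}(\beta+\gamma)$, which is the claimed inequality.

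The only delicate point is the absorption step, which requires $G^\star(\xi)<\infty$. This follows from the first display: its right-hand side $\langle\xi,u\rangle+\beta+\gamma$ is a finite real number, because $\mathcal{H}$ is real-valued and $\mathcal{H}^\star(\xi)$ is finite at a subgradient. With finiteness in hand, the absorption is legitimate and the proof is complete.
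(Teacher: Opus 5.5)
Your proposal is correct and follows essentially the same route as the paper. Both arguments combine Fenchel's identity at the subgradient, the inequality $G^\star-\gamma\le\mathcal{H}^\star$ obtained by conjugating the upper bound, the lower bound $-\beta$, and Fenchel's inequality applied to $\langle\xi,ru\rangle$ followed by absorption. Your explicit check that $G^\star(\xi)<\infty$ before absorbing is a detail the paper leaves implicit, and you justify it correctly.
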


\begin{proof}
Taking conjugates in \eqref{acot1} and evaluating at $\xi$, we obtain
\[
G^\star(\xi)-\gamma\leq  \mathcal{H}^\star(\xi)=\left<\xi,u\right>- \mathcal{H}(u).
\]

By \ref{G:young-ine}, we have
\[
\left<\xi,u\right>=\frac{1}{r}\left<\xi,r u\right>\leq \frac{1}{r}G^\star(\xi)+\frac{1}{r}G(ru).
\]

Combining these inequalities and using \eqref{acot1}, we obtain
\[
G^\star(\xi)\leq \frac{1}{r}G^\star(\xi)+\frac{1}{r}G(ru)+\beta+\gamma,
\]
from which the desired conclusion follows.
\end{proof}
We recall the anisotropic Poincar\'e--Wirtinger inequality (see Lemma 2.4 in \cite{Mazzone2019} and Theorem 4.4 in \cite{chamra2017anisotropic}).

\begin{lem}[Anisotropic Poincar\'e--Wirtinger inequality]
\label{lem:inclusion orliczII} Let $\phi:\rr^d\to [0,+\infty)$ be an $G$-function
and let $u\in\wphi\left([0,T],\rr^d\right)$. Then

\begin{equation}\label{eq:wirtinger}
  \phi\left(\tilde{u}(t)\right)\leq\frac{1}{T} \int_0^T \phi\left(Tu'(r)\right)\,dr.\tag{A.P-W.I}
\end{equation}

\end{lem}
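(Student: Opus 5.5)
The statement to prove is the anisotropic Poincar\'e--Wirtinger inequality \eqref{eq:wirtinger}: for every $t\in[0,T]$, $\phi(\tilde u(t))\le \frac1T\int_0^T\phi(Tu'(r))\,dr$. The plan is to write $\tilde u(t)$ as an average of increments of $u$ and then apply Jensen's inequality twice. Since $\phi$ is convex, lower semicontinuous and may take the value $+\infty$, Jensen's inequality for probability measures is available. It needs only that $\phi$ is convex, lower semicontinuous and bounded below, and all three hold here.

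\textbf{Step 1.} Fix $t\in[0,T]$. By definition of the mean,
\[
\tilde u(t)=u(t)-\frac1T\int_0^T u(s)\,ds=\frac1T\int_0^T \big(u(t)-u(s)\big)\,ds .
\]
Because $u$ is absolutely continuous, $u(t)-u(s)=\int_s^t u'(r)\,dr$.

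\textbf{Step 2.} Rewrite each increment as an average over $[0,T]$:
\[
u(t)-u(s)=\frac1T\int_0^T \sigma(s,t)\,\chi_{I(s,t)}(r)\,T u'(r)\,dr ,
\]
where $I(s,t)$ is the interval between $s$ and $t$ and $\sigma(s,t)=\pm1$ is the orientation sign. Combining with Step 1, $\tilde u(t)$ becomes a double average, with respect to the probability measure $\frac{ds}{T}\otimes\frac{dr}{T}$ on $[0,T]^2$, of the function $(s,r)\mapsto \sigma\,\chi_{I(s,t)}(r)\,Tu'(r)$.

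\textbf{Step 3.} Apply Jensen's inequality for this probability measure:
\[
\phi(\tilde u(t))\le \frac1{T^2}\int_0^T\!\!\int_0^T \phi\big(\sigma\,\chi_{I(s,t)}(r)\,Tu'(r)\big)\,dr\,ds .
\]
Since $\phi$ is even and $\phi(0)=0$, the integrand equals $\chi_{I(s,t)}(r)\,\phi(Tu'(r))$. This is at most $\phi(Tu'(r))$, because $\phi\ge0$. Integrating in $s$ then gives the bound $\frac1T\int_0^T\phi(Tu'(r))\,dr$, which is \eqref{eq:wirtinger}.

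The one point needing care is the justification of Jensen's inequality in this generality: $\phi$ may take the value $+\infty$, and $u'$ is only integrable rather than bounded. I would handle this by noting that $\phi$ is a supremum of affine minorants, since it is convex and lower semicontinuous. Jensen's inequality holds for each affine minorant trivially, by linearity of the integral, and taking the supremum yields it for $\phi$. The right-hand side of the final inequality may be $+\infty$, in which case there is nothing to prove. Measurability of the integrand is clear from Bochner measurability of $u'$ and lower semicontinuity of $\phi$.
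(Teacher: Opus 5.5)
Your proof is correct: the representation $\tilde u(t)=\frac{1}{T^2}\iint \sigma\,\chi_{I(s,t)}(r)\,Tu'(r)\,dr\,ds$, a single application of Jensen's inequality (justified through affine minorants, with $u'\in L^1$), and then $\phi(0)=0$, evenness and $\phi\ge 0$ together yield \eqref{eq:wirtinger}. The paper gives no proof of its own and simply cites Lemma~2.4 of \cite{Mazzone2019} and Theorem~4.4 of \cite{chamra2017anisotropic}. As far as I recall, the argument there is this same Jensen computation (averaging the increments $u(t)-u(s)=\int_s^t u'$ and enlarging the domain of integration), so your route coincides with the standard one.
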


We are now in a position to prove Lemma \ref{lem:form_quad}.

\begin{proof} \emph{Lemma \ref{lem:form_quad}. }
Let $u \in W^{1,G}_T([0,T],\mathbb{R}^{2n})$. Since $G$ is semi-symplectic, there exist $K>0$ and $C\geq 0$ such that $G^\star(Jx)\leq G(Kx)+C$ for all $x \in \mathbb{R}^{2n}$.

Recalling the notation $\tilde{u}=u-\bar{u}$, and using that $\int_0^T J u' \, dt=0$, in combination with \ref{G:young-ine}, the semi-symplecticity of $G$, and the anisotropic Poincaré--Wirtinger inequality (Lemma \ref{lem:inclusion orliczII}), we obtain
 \begin{eqnarray*}
 \int_0^T \left<Ju',u\right>dt&=&\frac{K}{T} \int_0^T\left<\frac{T}{K}Ju',\tilde{u}\right>dt\\
 &\geq&-\frac{K}{T}\left\{\int_0^T G^\star\left(J\frac{Tu'}{K}\right)dt+\int_0^T G(\tilde{u})dt\right\}\\
 &\geq& -\frac{K}{T} \left\{2\int_0^T G(Tu')dt + C\right\}.
 \end{eqnarray*}
\end{proof}

The following lemma is a particular case of \cite[Lemma 6.2]{RUF2023109996}.

\begin{lem}\label{lem:6.2ruf} Suppose that $u$ is a measurable function and that there exists $K>0$ such that
\begin{equation}\label{ineq:dualidad}
 \left|\int\langle u,\varphi\rangle dt\right|\leq K\|\varphi\|_{L^G},
\end{equation}
for every $\varphi\in L^\infty([0,T],\rr^d)$. Then $u\in L^{G^\star}$ and $\|u\|_{L^{G^\star}}\leq K$.
\end{lem}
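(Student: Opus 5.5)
The plan is to compute $\int_0^T G^\star(u/K)\,dt$ through its dual representation as a supremum over bounded test functions. The hypothesis \eqref{ineq:dualidad}, combined with the elementary bound \eqref{eq:desi_norm_mod} between the Luxemburg norm and the modular, should then give $\int_0^T G^\star(u/K)\,dt\le 1$, which is exactly $\|u\|_{L^{G^\star}}\le K$.

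\emph{Step 1 (integrability of $u$ and measurability).} First I note that the hypothesis implicitly requires $\langle u,\varphi\rangle$ to be integrable for every $\varphi\in L^\infty$. Taking $\varphi=e_i\,\mathrm{sgn}(u_i)$ shows $u\in L^1([0,T],\rr^d)$. Since $G^\star$ is convex and lower semicontinuous, $t\mapsto G^\star(u(t)/K)$ is measurable with values in $[0,\infty]$, so $\int_0^T G^\star(u/K)\,dt$ makes sense.

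\emph{Step 2 (interchange of supremum and integral).} Consider the normal integrand $f(t,x)=\langle u(t)/K,x\rangle-G(x)$. Here $G$ is finite and convex, hence continuous, and $u$ is measurable, so $f$ is Carath\'eodory. By definition \eqref{eq:conjugada}, $\sup_x f(t,x)=G^\star(u(t)/K)$. The space $L^\infty([0,T],\rr^d)$ is decomposable, and $\varphi=0$ gives $\int_0^T f(t,0)\,dt=0>-\infty$. Rockafellar's interchange theorem for integral functionals on decomposable spaces therefore yields
\[
\int_0^T G^\star\!\left(\frac{u}{K}\right)dt=\sup_{\varphi\in L^\infty}\int_0^T\Big(\Big\langle \frac{u}{K},\varphi\Big\rangle-G(\varphi)\Big)dt .
\]
This is the step I expect to require the most care: one must check the hypotheses of the interchange theorem, and in particular allow $G^\star$ to take the value $+\infty$. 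If one prefers to avoid citing it, I would instead prove ``$\le$'' directly. To do so, I would approximate a measurable almost-maximizer $\varphi_\varepsilon(t)$ of $x\mapsto f(t,x)$, obtained by a measurable selection on a countable dense set of $x$'s, and then truncate it to $\{|\varphi_\varepsilon|\le N\}$, letting $N\to\infty$ by monotone convergence. The truncated parts still contribute nonnegatively because $f(t,0)=0$.

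\emph{Step 3 (the estimate).} Fix $\varphi\in L^\infty$. By \eqref{ineq:dualidad} and then \eqref{eq:desi_norm_mod},
\[
\int_0^T\Big\langle \frac{u}{K},\varphi\Big\rangle dt\le \|\varphi\|_{L^G}\le \int_0^T G(\varphi)\,dt+1 .
\]
Hence every term of the supremum in Step 2 is at most $1$, and so $\int_0^T G^\star(u/K)\,dt\le 1$. This gives $u\in L^{G^\star}$ with $\|u\|_{L^{G^\star}}\le K$ directly from the definition of the Luxemburg norm.
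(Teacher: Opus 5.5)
Your proof is correct. It is also genuinely different from what the paper does, because the paper gives no argument: it only invokes the lemma as a particular case of \cite[Lemma 6.2]{RUF2023109996}.

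You work entirely at the level of modulars. Rockafellar's interchange theorem on the decomposable space $L^\infty$ turns $\int_0^T G^\star(u/K)\,dt$ into the supremum of $\int_0^T(\langle u/K,\varphi\rangle-G(\varphi))\,dt$ over bounded $\varphi$. This use is legitimate: $(t,x)\mapsto G(x)-\langle u(t)/K,x\rangle$ is Carath\'eodory, hence a normal integrand. Also $\varphi=0$ gives a finite value, and the theorem tolerates $G^\star=+\infty$. The hypothesis together with \eqref{eq:desi_norm_mod} then bounds every such term by $1$. That is exactly the Luxemburg statement $\|u\|_{L^{G^\star}}\le K$, with no loss in the constant.

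A more functional-analytic route would go as follows.
\begin{enumerate}
\item View $\varphi\mapsto\int_0^T\langle u,\varphi\rangle\,dt$ as a bounded functional on $E^G$.
\item Represent it through $L^{G^\star}=(E^G)^\star$ and identify the representative with $u$ by testing against $L^\infty$.
\item Use that the dual norm, which is the Orlicz norm of $G^\star$, dominates the Luxemburg norm.
\end{enumerate}
That route gives the slightly stronger bound on the Orlicz norm. However, to obtain the constant $K$ rather than a multiple of it (compare the factor $2$ in \eqref{holder}), it needs the duality in its isometric form together with the Orlicz/Luxemburg comparison.

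Your argument avoids all of that Orlicz duality theory. Its only external input is the interchange theorem. Your fallback can replace it, once you cap the target at $1/\varepsilon$ where $G^\star(u/K)=+\infty$. The fallback consists of near-maximizers chosen measurably from a countable dense set containing $0$, truncated on $\{|\varphi_\varepsilon|>N\}$, with the limit taken using $f(t,0)=0$.
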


\section{Clarke duality}\label{sec:3}

The aim of this section is to establish Clarke’s duality principle for Hamiltonians bounded by $G$-functions and possibly not differentiable in the Fréchet sense; see \cite{mawhin2010critical} for a classical version of this result and \cite{acinas2019clarke} for the case of Fréchet-differentiable Hamiltonians and $G$-functions.

In the proof of the main theorem in this section, we will use \cite[Cor. 4.5]{demaria2025}. The latter is formulated in terms of Clarke's subdifferential (denoted by $\partial_C$), which is a more general notion of differentiability than that arising from convex analysis (see \cite{clarke2013functional}). In particular, when a function $F$ is convex and lower semicontinuous, both subdifferentials coincide on the effective domain of $F$ (see \cite[Th. 10.8]{clarke2013functional}).

When $f(x,y)$ is a real-valued function defined on a Cartesian product of Banach spaces $X_1\times X_2$, we denote by $\partial_x f$ and $\partial_y f$ the partial subdifferentials of $f$, i.e., the subdifferentials with respect to each variable while the other is held fixed. It is easy to see from the definitions that the following relation holds:
 $$\partial f\subset \partial_x f\times \partial_y f.$$

As a consequence of this relation, when $\xi\in \partial f$, we write $\xi=(\xi_x,\xi_y)$, with $\xi_x\in \partial_x f$ and $\xi_y\in \partial_y f$.

\begin{Def} We define the space $E^G$ as the closure of $L^\infty$ in the space $L^G$.
 \end{Def}

\begin{remark} It is always true that $E^G$ is a subspace of $L^G$. The equality $L^G=E^G$ holds when $G\in \Delta_2$.
We know from \cite[Th. 6.4-Cor. 6.2]{RUF2023109996} and \cite[Th. 5.4]{orliczvectorial2005} that $L^{G^\star}=\left(E^G\right)^\star$.
\end{remark}

\begin{teo}[Clarke duality]\label{teodifacciondual} Let $G:\mathbb{R}^{2n}\to [0,\infty)$ be a $G$-function, with $G^\star$ semi-symplectic, and assume that the following conditions hold:

\begin{enumerate}
    \item $ \mathcal{H}: [0,T]\times \mathbb{R}^{2n}\to \mathbb{R}$ is measurable in $t$ for each $u \in \mathbb{R}^{2n}$, and moreover $ \mathcal{H}(t,\cdot)$ is convex for almost every $t \in [0,T]$.
    \item\label{hipcotaHconG}  There exist $\beta, \gamma \in L^1 \text{ and } \Lambda>\lambda>0 $ such that
    \begin{equation}\label{acota}
        G(\lambda y)-\beta(t)\leq  \mathcal{H}(t,y)\leq G(\Lambda y)+\gamma(t),
    \end{equation}
for almost every $ t \in [0,T]$ and all $y\in \mathbb{R}^{2n}$.
\end{enumerate}
Then, the dual action \eqref{eq:accion_dual} is differentiable in the sense of convex analysis and satisfies
\begin{equation}\label{diffacciondual}
 \partial\chi(v) \subset \int_0^T \partial \left( \frac{1}{2}\left<J v', v\right> +  \mathcal{H}^\star (t,v')\right)\,dt,
\end{equation}
in the space $W^{1, G^\star}_T([0,T],\mathbb{R}^{2n})\cap \{v:v' \in \Pi(E^{G^\star}, \lambda)\}$.

Moreover, if $v$ is a critical point of $\chi$ with $v' \in \Pi(E^{G^\star}, \lambda)$, then there exists $u \in W^{1, G}_T([0,T],\mathbb{R}^{2n})$ such that, for almost every $t$, $u(t) \in \partial  \mathcal{H}^\star(t,v'(t))$, and $u$ solves

\begin{equation}\label{ecuacionproblemacapdual}
 \left\{ \begin{array}{lcc}
             u'(t) \in J\partial  \mathcal{H}(t,u(t)) \\
             \\ u(0)=u(T). \\

             \end{array}
   \right.
\end{equation}
In addition, the relation $u'=Jv'$ holds.
\end{teo}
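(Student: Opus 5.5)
The plan is to split $\chi$ into a smooth, non-convex quadratic part and a convex integral part, compute each (sub)differential separately, and then read off the periodic solution from the Euler--Lagrange relation of a critical point.

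\emph{Step 1: bounds on $\mathcal{H}^\star$.} Taking Fenchel conjugates in \eqref{acota}, and using that conjugation reverses inequalities and $(G(\mu\,\cdot))^\star=G^\star(\cdot/\mu)$, I would obtain
\[
G^\star\!\left(\tfrac{y}{\Lambda}\right)-\gamma(t)\le \mathcal{H}^\star(t,y)\le G^\star\!\left(\tfrac{y}{\lambda}\right)+\beta(t).
\]
So $\mathcal{H}^\star(t,\cdot)$ is finite, convex and lower semicontinuous. Measurability in $t$ should follow from taking the supremum over a countable dense set. Since $G^\star$ is semi-symplectic, Proposition \ref{incrustsimplec} applied to $G^\star$ (with $G^{\star\star}=G$) shows that $J$ maps $L^{G^\star}$ continuously into $L^G$. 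Hence, by H\"older \eqref{holder} and $v\in L^\infty$, the quadratic term $Q(v)=\frac12\int_0^T\langle Jv',v\rangle$ is a continuous quadratic form on $W^{1,G^\star}_T$.

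\emph{Step 2: the derivative of $Q$.} I would compute the G\^ateaux derivative of $Q$ in a direction $h\in W^{1,G^\star}_T$:
\[
DQ(v)h=\tfrac12\int_0^T \langle Jh',v\rangle+\langle Jv',h\rangle\,dt=\int_0^T\langle Jv',h\rangle\,dt .
\]
This uses integration by parts and periodicity: $\int\langle Jh',v\rangle=-\int\langle h',Jv\rangle=\int\langle h,Jv'\rangle$.

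\emph{Step 3: the convex part.} I read $\Pi(E^{G^\star},\lambda)$ as the set of $w\in L^{G^\star}$ at distance less than $\lambda$ from $E^{G^\star}$. On this set $\int_0^T G^\star(w/\lambda)<\infty$ holds locally uniformly, so the upper bound of Step 1 makes $\Phi(v)=\int_0^T\mathcal{H}^\star(t,v')$ finite and locally bounded above near $v$. The functional $\Phi$ is convex and locally Lipschitz there. By \cite[Cor. 4.5]{demaria2025}, together with the coincidence of the Clarke and convex subdifferentials for convex lower semicontinuous functionals, its subdifferential consists of functionals $h\mapsto\int\langle w,h'\rangle$ with $w(t)\in\partial\mathcal{H}^\star(t,v'(t))$ a.e. Adding the smooth part $DQ(v)$ via the sum rule for a $C^1$ function plus a convex one gives \eqref{diffacciondual}.

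\emph{Step 4: critical points.} If $0\in\partial\chi(v)$, I would obtain a measurable selection $w(t)\in\partial\mathcal{H}^\star(t,v'(t))$ with
\[
\int_0^T\langle Jv',h\rangle+\langle w,h'\rangle\,dt=0\qquad\text{for all } h\in W^{1,G^\star}_T .
\]
I need $w\in L^G$. This follows either from Proposition \ref{cota1}, applied to $\mathcal{H}^\star$ with the bounds of Step 1 (which controls $\int G(w/c)$ by $\int G^\star(rv'/\lambda)$), or directly from Lemma \ref{lem:6.2ruf} via the duality $L^G=(E^{G^\star})^\star$.

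Next I would test with $h=\int_0^t\varphi$ for $\varphi\in L^\infty$ of zero mean; such $h$ is periodic. A du Bois-Reymond argument then gives $w(t)=c+J(v(t)-v(0))$ a.e. So $w$ agrees a.e. with an absolutely continuous, $T$-periodic function $u$ with $u'=Jv'\in L^G$, again by Proposition \ref{incrustsimplec}, and therefore $u\in W^{1,G}_T$.

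Finally, $u(t)\in\partial\mathcal{H}^\star(t,v'(t))$ is equivalent by Fenchel's identity \ref{G:young-eq} to $v'(t)\in\partial\mathcal{H}(t,u(t))$. Hence $u'=Jv'\in J\partial\mathcal{H}(t,u)$, and $u(0)=u(T)$.

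The main obstacle is Step 3. One must check that the hypotheses of \cite[Cor. 4.5]{demaria2025} hold on the region $\{v'\in\Pi(E^{G^\star},\lambda)\}$, where $L^{G^\star}$ need not equal $E^{G^\star}$. Concretely, this requires local boundedness and continuity of $\Phi$, and a measurable selection in the integrand subdifferential. I would first try to derive these from the two-sided bounds of Step 1 combined with the distance characterization of $\Pi$.
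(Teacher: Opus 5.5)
Your proposal is correct and is essentially the paper's proof. It uses the same conjugate bounds on $\mathcal{H}^\star$, the same appeal to \cite[Cor.~4.5]{demaria2025}, Proposition~\ref{cota1} to control the selection in $\partial\mathcal{H}^\star(t,v')$, Proposition~\ref{incrustsimplec} for $Jv'\in L^G$, and Fenchel's identity. The only variations are that you split off the quadratic term and apply Cor.~4.5 to the convex integrand $\mathcal{H}^\star(t,y)$ alone instead of to $\mathcal{L}(t,x,y)=\frac12\langle Jy,x\rangle+\mathcal{H}^\star(t,y)$, and that you use du Bois-Reymond instead of identifying a weak derivative.

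The ``main obstacle'' you leave open in Step~3 is closed by the estimate you already use in Step~4. For $\zeta\in\partial\mathcal{H}^\star(t,y)$, Proposition~\ref{cota1} with your Step~1 bounds gives $G(\lambda\zeta)\le\frac{1}{r-1}G^\star(ry/\lambda)+\frac{r}{r-1}(\beta+\gamma)$, which is exactly the pointwise growth condition the paper checks before invoking Cor.~4.5 (measurability and regularity are automatic for your convex integrand). This gives the conclusion on $\Pi(E^{G^\star},\lambda/r)$, hence on $\Pi(E^{G^\star},\lambda)$ after letting $r\downarrow 1$.
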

\begin{proof}

Consider the Lagrangian function $\mathcal{L}:[0,T]\times \mathbb{R}^{2n}\times \mathbb{R}^{2n}\to \mathbb{R}$, defined by
\[\mathcal{L}(t,x,y)=\frac{1}{2}\left<J y, x\right> +  \mathcal{H}^\star (t,y).\]
It is easy to see that, for each $(x,y) \in \mathbb{R}^{2n}\times \mathbb{R}^{2n}$, the function $t\mapsto \mathcal{L}(t,x,y)$ is measurable. To prove the measurability of $\mathcal{H}^\star$ with respect to $t$, we use the separability of $\mathbb{R}^{2n}$ to reduce the supremum in \eqref{eq:conjugada} to a supremum over a countable set.

If we take conjugates in \eqref{acota} and use well-known properties of convex functions (see \cite[Subsection 2.1]{acinas2019clarke}), we obtain:
\begin{equation}\label{desigconjug}
   -\gamma(t)\leq G^\star\left(\frac{y}{\Lambda}\right)-\gamma(t)\leq  \mathcal{H}^\star(t,y)\leq G^\star\left(\frac{y}{\lambda}\right)+\beta(t),
\end{equation}
for almost every $ t \in [0,T]$ and all $y\in \mathbb{R}^{2n}$. This shows that $\mathcal{H}^\star$ is finite for almost every $t$. Additionally, since $\mathcal{H}^\star(t,\cdot)$ is convex, it follows from \cite[Corollary 2.35]{clarke2013functional} that $\mathcal{H}^\star(t,\cdot)$ is locally Lipschitz with respect to $y$ for almost every $t$. Consequently, $\mathcal{L}$ is locally Lipschitz for almost every $t$.

Finally, $\mathcal{L}$ is regular (see \cite[Def. 10.12]{clarke2013functional}) for almost every $t$ and for all $(x,y)$, since it is the sum of a Fréchet differentiable function and a convex function (see \cite[Prop. 2.3.6]{clarke1990optimization}). Using \ref{G:young-ine} and \eqref{desigconjug}, we have

 \begin{equation}\label{eq:cota_L}
  \begin{split}
|\mathcal{L}(t,x,y)| &\leq  \lambda \left(G^\star\left(\frac{y}{\lambda}\right)+G(J^tx)\right) +G^\star\left(\frac{y}{\lambda}\right)+\beta(t) \\
                     &\leq  C(x) \left[G^\star\left(\frac{y}{\lambda}\right) + b(t)\right],
  \end{split}
\end{equation}
for almost every $ t \in [0,T]$ and all $y\in \mathbb{R}^{2n}$, where $C$ is a continuous function of $x$ and $b\in L^1$.

Since $\mathcal{L}(t,x,y)$ is Fréchet differentiable with respect to the variable $x$, if $\xi \in \partial\mathcal{L}(t,x,y)$, then $\xi_{x}=\frac{1}{2}Jy$.
From the convexity of $G^\star$ and the fact that $G^\star(0)=0$, it follows that, for all $y\in\mathbb{R}^{2n}$, $|y|\leq C(G^\star(y)+1)$. Therefore
\[|\xi_{x}|= \frac{1}{2}|Jy| \leq |y|\leq \lambda C\left[G^\star\left(\frac{y}{\lambda}\right)+1\right].\]

With respect to $\xi_y$, we observe that there exists $\zeta \in \partial_y  \mathcal{H}^\star(t,y)$ such that
\begin{equation}\label{subdifLrespectoy}
  \xi_{y}=-\frac{1}{2}Jx+\zeta,
\end{equation}

and hence

\[G\left(\frac{\lambda \xi_{y}}{2}\right)\leq\frac{1}{2}G\left(\frac{\lambda Jx}{2}\right)+\frac{1}{2}G(\lambda \zeta).\]
Applying Proposition \ref{cota1} and \eqref{desigconjug}, we obtain
\begin{equation}\label{inec26}
    G(\lambda \zeta) \leq \frac{1}{r-1}G^\star\left(r\frac{y}{\lambda}\right)+\frac{r}{r-1}(\beta(t)+\gamma(t)).
\end{equation}
Thus
\begin{eqnarray*}
G\left(\frac{\lambda\xi_{y}}{2}\right)&\leq&\frac{1}{2}G\left(\frac{\lambda Jx}{2}\right)+\frac{1}{2(r-1)}G^\star\left(r\frac{y}{\lambda}\right)+\frac{r}{2(r-1)}(\beta(t)+\gamma(t))\\
&\leq&  \frac{1}{2} \max \left\{G\left(\frac{\lambda Jx}{2}\right),\frac{r}{r-1}\right\}\left[G^\star\left(r\frac{y}{\lambda}\right)+(\beta(t)+\gamma(t)+1)\right].
\end{eqnarray*}
In summary, we have proved that there exist $\lambda_0, \Lambda_0 > 0$, $0\leq b \in L^1$ and a continuous function $a:\mathbb{R}^{2n} \rightarrow [0,+\infty)$ such that, for a.e. $t \in [0,T]$, if
 $\xi=(\xi_{x},\xi_{y}) \in \partial_{x}\mathcal{L}(t,x,y)\times\partial_{y}\mathcal{L}(t,x,y)$, then
      \begin{equation}\label{Hip4}
          |\mathcal{L}(t,x,y)| + |\xi_{x}|+G \left(\frac{\xi_{y}}{\lambda_0}\right) \leq a(x) \left(G^\star \left(\frac{y}{\Lambda_0}\right)+ b(t)\right)
     \end{equation}

Therefore, the function $\mathcal{L}$ satisfies all the hypotheses of \cite[Cor. 4.5]{demaria2025}. Hence, \eqref{diffacciondual} holds for all $v \in W^{1,G^\star}_T\cap \{v:v' \in \Pi(E^{G^\star}, \Lambda_0)\}$. Since $\Lambda_0=\lambda/r$ and $r$ can be chosen arbitrarily close to one, \eqref{diffacciondual} holds for all $v \in W^{1,G^\star}_T\cap \{v:v' \in \Pi(E^{G^\star}, \lambda)\}$. This concludes the proof of the differentiability of the dual action.

Let $v \in W^{1,G^\star}_T \cap \{v:v' \in \Pi(E^{G^\star}, \lambda)\}$ be a critical point of $\chi$. Then $0 \in \partial \chi(v)$, that is, for almost every $t$ there exists $ \xi(t)=(\xi_{x}(t),\xi_{y}(t)) \in \partial \mathcal{L}(t,v(t),v'(t))$ such that for all $h \in  W^{1,G^\star}_T([0,T], \mathbb{R}^{2n})$ it holds that

\[0=\int_0^T \left<\xi_{x}(t),h(t)\right>+\left<\xi_{y}(t),h'(t)\right>\,dt.\]

Taking into account that $\xi_{x}(t)=\frac{1}{2}Jv'(t)$ and that, by \eqref{subdifLrespectoy}, $\xi_{y}(t)=-\frac{1}{2}Jv(t)+u(t)$ with $u(t) \in \partial  \mathcal{H}^\star(t,v'(t))$, we obtain
\[0=\int_0^T \frac{1}{2}\left<Jv'(t),h(t)\right>-\frac{1}{2}\left<Jv(t),h'(t)\right>+\left<u(t),h'(t)\right>\,dt,\]
from which we conclude
\[\int_0^T\left<u(t)-\frac{1}{2}Jv(t),h'(t)\right> \, dt= -\int_0^T \frac{1}{2}\left<Jv'(t),h(t)\right> \, dt.\]

According to Proposition \ref{incrustsimplec}, both $-\frac{1}{2}Jv$ and $Jv'$ belong to $L^G$. Moreover, from inequality \eqref{inec26}, taking $r>1$ such that $d(v', E^{G^\star})<\lambda/r$, it follows from \cite[Eq. (6)]{Mazzone2019} that $u \in L^G$. In particular, $u-\frac{1}{2}Jv$ and $Jv'$ belong to $L^1$. This shows that $\frac{1}{2} Jv'$ is the weak derivative of $u-\frac{1}{2}Jv$, and therefore we can write $u'-\frac{1}{2}Jv'=\frac{1}{2}Jv'$, or equivalently $u'=Jv'$.

Moreover, from \ref{G:young-eq}, we know that the relation $u(t) \in \partial  \mathcal{H}^\star(t,v'(t))$ implies that $v'(t) \in \partial  \mathcal{H}(t,u(t))$. This shows that the differential inclusion in \eqref{ecuacionproblemacapdual} holds.

On the other hand,
\[
u(T)-u(0)=\int_0^T u'(t)\,dt=\int_0^T Jv'(t)\,dt=J\int_0^T v'(t)\,dt=0,
\]
which shows that the boundary condition in \eqref{ecuacionproblemacapdual} is satisfied.
\end{proof}

\section{Proof main theorem}

We are now in a position to prove our main Theorem \ref{teominimoacciondual}. We split the proof into six steps.

\paragraph{Step 1:} \emph{Estimating the perturbed dual action.}

Let $0<r<1$ be sufficiently small such that
\[\Lambda^{-1}>(1+r)T \max\{1,C_1/2\},\]
and take $\epsilon>0$ with $\epsilon<r\Lambda$.
We define the \emph{perturbed Hamiltonian} as
\[ \mathcal{H}_{\epsilon}(t,u):=  \mathcal{H}(t,u)+G(\epsilon u).\]

By item 1 of Theorem \ref{teominimoacciondual}, inequality \ref{G:young-ine}, and \ref{G:supra_adit}, we have
\begin{equation}\label{des32}
  \begin{split}
    \mathcal{H}_{\epsilon}(t,u)&\geq \left<\xi(t),u\right>+G(\epsilon u)\\
&\geq - G^\star\left(\frac{1}{r \epsilon}\xi(t)\right)-G(r\epsilon u)+G(\epsilon u)\geq G((1-r)\epsilon u)-\beta(t),
\end{split}
\end{equation}
where $\beta(t):=G^\star\left(\frac{1}{r \epsilon}\xi(t)\right)$. Since $G^\star \in \Delta_2$ and $\xi \in L^{G^\star}$, it follows from \cite[Theorem 3.2]{orliczvectorial2005} that $\beta \in L^1([0,T],\mathbb{R})$.

On the other hand,
\begin{equation}\label{des33}
   \mathcal{H}_{\epsilon}(t,u)\leq G(\Lambda u) + \alpha(t) +G(\epsilon u)\leq G((1+r)\Lambda u)+\alpha(t).
\end{equation}

From \eqref{des32}, \eqref{des33}, and the properties of the Fenchel conjugate, we obtain
\begin{equation}\label{inec34}
 G^\star\left(\frac{v}{(1+r)\Lambda}\right)-\alpha(t)\leq  \mathcal{H}^\star_{\epsilon}(t,v)\leq G^\star\left(\frac{v}{(1-r)\epsilon}\right)+\beta(t).
\end{equation}

We consider the dual action $\chi_{\epsilon}:W^{1,G^\star}_T([0,T],\mathbb{R}^{2n})\to \mathbb{R}$ associated with $\mathcal{H}_{\epsilon}$, i.e.
\[\chi_{\epsilon}(v):=\int_0^T\frac{1}{2}\left<Jv',v\right> +  \mathcal{H}^\star_{\epsilon}(t,v')\,dt.\]

From \eqref{inec34}, \eqref{inec16}, and using that $T(1+r)\Lambda<1$, we have

\begin{multline}\label{eq:estima_dual}
\chi_{\epsilon}(v)\geq-\frac{C_1}{2}\int_0^TG^\star(Tv')dt+\int_0^T G^\star\left(\frac{v'}{(1+r)\Lambda}\right)dt-\int_0^T \alpha(t)dt-C_2\\
\geq -\frac{C_1}{2}\int_0^TG^\star(Tv')dt+\frac{1}{T(1+r)\Lambda}\int_0^T G^\star(Tv')dt-\int_0^T \alpha(t)dt-C_2\\
:= C_{\chi}\int_0^T G^\star(Tv')dt-B_{\chi}.
\end{multline}
By the hypothesis on $\Lambda$ and our choice of $r$, we infer that $C_{\chi}>0$.

\paragraph{Step 2:} \emph{Solving perturbed problem via Clarke duality.}

Since $\chi_{\epsilon}(v)=\chi_{\epsilon}(v+c)$ for every $c\in \mathbb{R}^{2n}$, it is sufficient to minimize $\chi_{\epsilon}$ on the set
\[\widetilde{W}^{1,G^\star}_T:=\left\{v \in W^{1,G^\star}_T:\int_0^T v(t)\, dt=0\right\}.\]

The dual action is coercive on this space. To see this, let $\{v_n\}\subset \widetilde{W}^{1,G^\star}_T$ and suppose that $\|v_n\|_{W^{1,G^\star}}\to \infty$. Then, by Proposition \ref{propos:noprmas_equi} and since $\bar{v}_n=0$, it follows that $\|v'_n\|_{L^{G^\star}}\to \infty$. Thus, from inequality \eqref{eq:desi_norm_mod}, $\int_0^TG^\star(Tv'_n)\,dt\to \infty$ and, consequently, by \eqref{eq:estima_dual}, $\chi_{\epsilon}(v_n)\to \infty$.

It follows from the above argument that if $\{v_n\}\subset\widetilde{W}^{1,G^\star}_T([0,T],\mathbb{R}^{2n})$ is a minimizing sequence for $\chi_{\epsilon}$, then $\|v_n\|_{W^{1,G^\star}}$ is bounded. As a consequence, there exists a subsequence (which we still denote by $v_n$) that converges uniformly to a function $v_\epsilon$ (see Proposition 3.4 in \cite{demaria2025}). We recall that $L^{G^\star}=\left(E^G\right)^\star$. From \cite[Th. 6.3]{orliczvectorial2005}, we know that $E^{G}$ is separable. Since $\{v'_n\}$ is a bounded sequence in $L^{G^\star}$, by \cite[Cor. 3.30]{brezis2011functional} there exists a subsequence $\{v_n\}$ and $w$ in $L^{G^\star}$ such that $\{v'_n\}$ converges weak$^\star$ to $w$. It is easy to see that $w$ is an absolutely continuous function and that $w=v_\epsilon'$ a.e. Consequently, $v_\epsilon\in W^{1,G^\star}_T([0,T],\mathbb{R}^{2n})$, and by the uniform convergence of $v_n$ to $v_\epsilon$ we have that $v_\epsilon(0)=v_\epsilon(T)$ and $\int_0^T v_\epsilon(t)\,dt=0$. In conclusion, $v_\epsilon\in\widetilde{W}^{1,G^\star}_T([0,T],\mathbb{R}^{2n})$.

On the other hand, using \cite[Th. 3.6]{buttazzo1998one}, it follows that $\mathcal{I}:L^1\times L^1 \to \mathbb{R}$, defined by
\[\mathcal{I}(u,v):=\int_0^T\frac{1}{2}\left<Jv(t),u(t)\right> +  \mathcal{H}^\star_{\epsilon}(t,u(t))\,dt,\]
is sequentially lower semicontinuous when $L^1\times L^1$ is equipped with the product topology $\mathscr{T}$ corresponding to the strong topology on the first factor and the weak topology on the second.

In our case, we know that $v_n$ converges uniformly to $v_\epsilon$ and that $v'_n$ converges weak$^\star$ to $v_\epsilon'$ in $L^{G^\star}$. Hence, $(v_n,v'_n)$ converges to $(v_\epsilon,v_\epsilon')$ in $\mathscr{T}$.

With this, applying \cite[Th. 3.6]{buttazzo1998one}, we obtain
\[\chi_\epsilon(v_\epsilon)=\mathcal{I}(v_\epsilon,v_\epsilon')\leq \lim_{n\to \infty}\chi_\epsilon(v_n)=\inf_{v\in\widetilde{W}^{1,G^\star}_T}\chi_\epsilon(v).\]
Thus, $v_{\epsilon} \in \widetilde{W}^{1,G^\star}_T([0,T],\mathbb{R}^{2n})$ is a minimizer of $\chi_{\epsilon}$.

Since $G^\star \in \Delta_2$, we know from \cite[Th. 5.2]{orliczvectorial2005} that $L^\infty$ is dense in $L^{G^\star}$. Then, by Theorem \ref{teodifacciondual}, there exists $u_\epsilon \in W^{1,G}_T([0,T],\mathbb{R}^{2n})$ such that for almost every $t$, $u_{\epsilon}(t)\in \partial  \mathcal{H}^\star_{\epsilon}(t,v'_{\epsilon}(t))$, and $u_\epsilon$ solves

\begin{equation}\label{ec37}
\left\{ \begin{array}{lcc}
             u'_{\epsilon} (t)\in J\partial  \mathcal{H}_{\epsilon}(t,u_{\epsilon}(t)) \\
             \\ u_{\epsilon}(0)=u_{\epsilon}(T) \\
             \end{array}
   \right.
\end{equation}
and, furthermore, we have the relation $u'_{\epsilon}=Jv'_{\epsilon}$, or equivalently
\begin{equation}\label{ec39}
J v_{\epsilon}=u_{\epsilon}-\bar{u}_{\epsilon}.
\end{equation}
\noindent \textbf{Step 3:} \emph{Estimating $v_\epsilon$.}

The function $\bar{\mathcal{H}}:\mathbb{R}^{2n}\to \mathbb{R}$, defined by
\[
x\mapsto \int_0^T  \mathcal{H}(t,x)\,dt,
\]
is convex and coercive on a finite-dimensional space; therefore, $\bar{\mathcal{H}}$ attains a minimum at some $x_0\in \mathbb{R}^{2n}$. Consequently, $0 \in \partial \bar{\mathcal{H}}(x_0)$.

Since the hypotheses of \cite[Th. 2.7.1]{clarke1990optimization} are satisfied, there exists a function $\eta(t) \in \partial  \mathcal{H}(t,x_0)$ such that, for all $x \in \mathbb{R}^{2n}$, the mapping $t\mapsto \langle \eta(t),x\rangle$ belongs to $L^1$ and
\[
0=\int_0^T \langle \eta(t),x\rangle\,dt
=\left\langle\int_0^T \eta(t)\, dt,x\right\rangle
\quad \forall \, x \in \mathbb{R}^{2n},
\]
from which it follows that
\[
\int_0^T \eta(t)\, dt=0.
\]

We define
\[
w(s)=\int_0^s\eta(t)\, dt+C,
\]
where $C$ is a constant chosen so that $\int_0^T w(s)\,ds=0$. The function $w$ is absolutely continuous. We now show that $w \in \widetilde{W}^{1,G}_T([0,T],\mathbb{R}^{2n})$.

By items 1 and 2 of Theorem \ref{teominimoacciondual}, it follows that for any $t \in [0,T]$ and $y \in \mathbb{R}^{2n}$
\[
-G^\star\left(\frac{\xi(t)}{\Lambda}\right)
\leq  \mathcal{H}(t,y)+G(\Lambda y)
\leq 2G(\Lambda y) + \alpha(t).
\]

The functions $\mathcal{H}(t,y)+G(\Lambda y)$ and $2G(\Lambda y)$ satisfy the hypotheses of Proposition \ref{cota1}. Taking $r=2$, we have that if $\zeta \in \partial  \mathcal{H}(t,y)$, then
\[
G^\star \left(\frac{\zeta + \Lambda \nabla G(\Lambda y)}{2\Lambda}\right)
\leq G(2\Lambda y) +  G^\star\left(\frac{\xi(t)}{\Lambda}\right) + \alpha(t).
\]

From this inequality, taking into account that $w'=\eta$, Fenchel's identity, and \ref{G:d(2x)}, we obtain
\begin{eqnarray*}
G^\star \left(\frac{w'}{4\Lambda}\right)
&=& G^\star \left(\frac{\eta}{4\Lambda}\right) \\
&\leq& \frac{1}{2} G^\star \left(\frac{\eta+\Lambda\nabla G(\Lambda x_0)}{2\Lambda}\right)
+ \frac{1}{2}G^\star\left(\frac{\nabla G(\Lambda x_0)}{2}\right)\\
&\leq& \frac{1}{2}\left[G(2\Lambda x_0)+G^\star\left(\frac{\xi(t)}{\Lambda}\right)+\alpha(t)\right]
+\frac{1}{4} \Lambda x_0\cdot \nabla G(\Lambda x_0)\\
&\leq& G(2\Lambda x_0)
+\frac{1}{2}G^\star\left(\frac{\xi(t)}{\Lambda}\right)
+\frac{1}{2}\alpha(t)
\in L^1.
\end{eqnarray*}
Thus $w' \in L^{G^\star}$.

Now, using Fenchel's identity
\[
\mathcal{H}^\star(t,w')=\langle w',x_0\rangle- \mathcal{H}(t,x_0),
\]
we deduce that $\mathcal{H}^\star(\cdot,w'(\cdot))\in L^1([0,T],\mathbb{R})$.

From the inequality $\mathcal{H}(t,u)\leq  \mathcal{H}_{\epsilon}(t,u)$, we deduce that $\mathcal{H}^\star_{\epsilon}(t,v)\leq  \mathcal{H}^\star(t,v)$. Using this and the bound from Step~1 for $\chi_{\epsilon}$, we obtain
\begin{multline*}
C_{\chi}\int_0^T G^\star(Tv'_{\epsilon})\,dt - B_{\chi}
\leq \chi_{\epsilon}(v_{\epsilon})
\leq \chi_{\epsilon}(w) \\
\leq \int_0^T \frac{1}{2}\langle Jw',w\rangle
+ \mathcal{H}^\star(t,w')\,dt
=:c_1<\infty.
\end{multline*}
\paragraph{Step 4:} \emph{Estimating $u_\epsilon$.}

Since $G^\star$ is semi-symplectic, by Proposition \ref{incrustsimplec} and Step 3 we obtain that $u'_{\epsilon}$ is bounded in $L^G$. Then, by the anisotropic Poincaré--Wirtinger inequality, it follows that $\tilde{u}_{\epsilon}$ is uniformly bounded in $L^\infty$. Furthermore, there exists $C_3$ such that
\begin{equation}\label{cotaGconutilde}
 \int_0^T G(\Lambda \tilde{u}_{\epsilon})\,dt\leq C_3.
\end{equation}
Thus, using inequality \eqref{inec16}, we have
\begin{multline}\label{ec40}
\int_0^T\langle Ju'_{\epsilon},u_{\epsilon}\rangle\,dt
= \int_0^T \langle -v'_{\epsilon},J v_{\epsilon}+\bar{u}_{\epsilon}\rangle\,dt\\
\geq -C_4\int_0^T G^\star(Tv'_{\epsilon})\,dt - C_5 \geq -C_6.
\end{multline}

By \eqref{ec37}, the convexity of $\mathcal{H}(t,\cdot)$, and well-known properties of subdifferentials (see \cite[Th. 10.8, Th. 10.13]{clarke2013functional}), we obtain
\begin{equation}\label{eq:forma_der}
 \exists\, \xi_{\epsilon} \in \partial  \mathcal{H}(t,u_{\epsilon}) \text{ such that }
 u'_{\epsilon}=J\big(\xi_{\epsilon}+\epsilon \nabla G(\epsilon u_{\epsilon})\big).
\end{equation}
Therefore, taking into account the second assumption in Theorem \ref{teominimoacciondual} and the fact that $\langle x,\nabla G(x)\rangle\geq 0$ for any $x \in \mathbb{R}^{2n}$, we deduce
\begin{eqnarray*}
2 \mathcal{H}\left(t, \frac{\bar{u}_{\epsilon}}{2}\right)
&\leq&  \mathcal{H}(t,u_{\epsilon})+  \mathcal{H}(t,-\tilde{u}_{\epsilon})\\
&\leq& \langle \xi_{\epsilon},u_{\epsilon}\rangle+  \mathcal{H}(t,0)
+G(\Lambda \tilde{u}_{\epsilon})+\alpha(t)\\
&\leq& \langle -J u'_{\epsilon}-\epsilon \nabla G(\epsilon u_{\epsilon}),u_{\epsilon}\rangle
+ \mathcal{H}(t,0)+G(\Lambda \tilde{u}_{\epsilon})+\alpha(t)\\
&\leq& \langle -J u'_{\epsilon},u_{\epsilon}\rangle
+ \mathcal{H}(t,0)+G(\Lambda \tilde{u}_{\epsilon})+\alpha(t).
\end{eqnarray*}
Integrating the previous inequality and using \eqref{ec40} and \eqref{cotaGconutilde}, it follows that
\[
\int_0^T  \mathcal{H}\left(t,\frac{\bar{u}_{\epsilon}}{2}\right)\,dt\leq C_7.
\]

Now, by the third assumption in the hypotheses of our theorem, it follows that $\bar{u}_{\epsilon}$ is uniformly bounded. Consequently, $u_{\epsilon}$ is uniformly bounded in $W^{1,G}([0,T],\mathbb{R}^{2n})$.

\textbf{Step 5:} \emph{Solving  problem \eqref{problemadualfinal}.}

As in Step 2, we obtain a sequence $u_{\epsilon_n}$ and a function $u\in W^{1,G}([0,T],\mathbb{R}^{2n})$ such that $u_{\epsilon_n}$ converges uniformly to $u$ and $u'_{\epsilon_n}$ converges in the weak$^\star$ topology of $L^G$ to $u'$. We now show that $u$ solves our problem.

First, we prove that $\xi_{\epsilon_n} \overset{\star}{\rightharpoonup} -Ju'$ in $L^{G^\star}$. Taking into account \eqref{eq:forma_der}, it is sufficient to show that $Ju'_{\epsilon_n} \overset{\star}{\rightharpoonup} Ju'$ and $\epsilon_n \nabla G (\epsilon_n u_{\epsilon_n}) \overset{\star}{\rightharpoonup} 0$ in $L^{G^\star}$.

Note that $J$ does not necessarily induce an embedding from $L^G$ into $L^{G^\star}$. However, it is true that $J: L^G \hookrightarrow L^1$. Indeed, if $w\in L^G$ and $\varphi \in L^\infty$, then, since $J$ induces an isometry on $L^\infty$, we have
\[
\|Jw\|_{L^1} = \sup_{\|\varphi\|_{L^\infty}\leq 1}\int_0^T\langle Jw,\varphi\rangle dt
= \sup_{\|\varphi\|_{L^\infty}\leq 1}\int_0^T\langle w,J\varphi\rangle dt
\leq K \|w\|_{L^G}.
\]

Therefore, using that $Ju'\in L^1$ and that $u'_{\epsilon_n}\overset{\star}{\rightharpoonup} u'$ in $L^G$, we obtain, for every $\varphi \in L^\infty$,

\begin{multline}\label{convergencia2} -\int_0^T\left<Ju'_{\epsilon_n},\varphi\right>dt=\int_0^T\left<u'_{\epsilon_n},J\varphi\right>dt\to \int_0^T\left<u',J\varphi\right>dt =-\int_0^T\left<Ju',\varphi\right>dt \end{multline}

At this point, note that we do not know if $Ju'\in L^{G^\star}$. This follows from Lemma \ref{lem:6.2ruf} and from the fact that the following inequality holds for every $\varphi\in L^\infty$:
\[
\left|\int_0^T\langle Ju',\varphi\rangle dt\right|
\leq \sup_n \left|\int_0^T\langle Ju'_{\epsilon_n},\varphi\rangle dt\right|
\leq K \|\varphi\|_{L^G},
\]
where $K$ is a constant depending on the uniform bounds of the functions $v_{\epsilon}$ obtained in Step 3.

The term $\epsilon_n \nabla G (\epsilon_n u_{\epsilon_n})$ converges uniformly to $0$. To see this, it is enough to observe that $u_{\epsilon_n}$ converges uniformly to $u$ and that $\nabla G$ is locally bounded, since $G$ is locally Lipschitz.

Now we show that $-Ju'\in \partial  \mathcal{H}(t,u)$ for a.e. $t\in[0,T]$. Let $v\in L^{G}$. Since $\xi_n(t) \in \partial  \mathcal{H}(t,u_{\epsilon_n}(t))$, by definition we have:
\begin{equation}\label{subdiferencial1}
    \left<\xi_n(t),v(t)-u_{\epsilon_n}(t)\right>+ \mathcal{H}(t,u_{\epsilon_n}(t))\leq  \mathcal{H}(t,v(t)).
\end{equation}

Integrating this inequality and taking into account the uniform boundedness of the functions $u_{\epsilon_n}$, the bounds \eqref{eq:cota_sup_H} and \eqref{eq:cota_inf_H}, the fact that $\xi_n\overset{\star}{\rightharpoonup} -Ju'$, and the Lebesgue Dominated Convergence Theorem, we conclude that
\[
\int_0^T\left<-Ju',v-u\right>dt +\int_0^T  \mathcal{H}(t,u)dt\leq \int_0^T  \mathcal{H}(t,v)dt.
\]
In other words, $-Ju'\in \partial \mathcal{I}(u)$, where
\[
\mathcal{I}(v)= \int_0^T  \mathcal{H}(t,v)dt.
\]

We now prove that $\mathcal{H}$ satisfies the hypotheses of \cite[Cor. 4.5]{demaria2025}. From this result, and using standard arguments, we conclude that
\[
u'(t)\in J\partial  \mathcal{H}(t,u(t)) \quad \text{for almost every } t\in [0,T].
\]
Furthermore, from the uniform convergence of $u_{\epsilon_n}$ we conclude that $u$ satisfies the boundary condition $u(0)=u(T)$, that is, $u$ is a solution of \eqref{problemadualfinal}.

Consequently, let us verify that the hypotheses of \cite[Cor. 4.5]{demaria2025} are satisfied. The only one that does not follow immediately from the assumptions on $\mathcal{H}$ is the following:
\begin{equation}\label{Hip4reformulada}
        \forall x \in \mathbb{R}^{2n},\ \forall \bar{\xi} \in \partial  \mathcal{H}(t,x): \quad |\bar{\xi}| \leq C(x)\, b(t),
\end{equation}
where $b(t)$ is an integrable function and $C(x)$ is bounded on bounded sets.

Using \eqref{eq:cota_inf_H} and \ref{G:young-ine}, we obtain
\[
\mathcal{H}(t,x)\geq \left<\xi(t),x\right>\geq -G^\star\left(\xi(t)\right)- G(x)=-\beta(t)-G(x).
\]
It follows that $\beta(t)= G^\star\left(\xi(t)\right) \in L^1$, since $\xi\in L^{G^{\star}}$ and $G^\star\in\Delta_2$.

From the hypotheses, we also have that
\begin{equation}\label{cotasub2}
 \mathcal{H}(t,x) +  G(x)\leq 2 G(b x)+\alpha(t),
\end{equation}
where $b\geq \max\{\Lambda,1\}$. Let $\bar{\xi} \in \partial \mathcal{H}(t,x)$. Then we can apply Proposition \ref{cota1} to the function $\mathcal{H}+G$ instead of $\mathcal{H}$, taking $r=2$, to obtain
\[
G^\star\left(\frac{\bar{\xi}+  \nabla G(x)}{2b}\right)\leq  G\left(2bx\right)+ \beta(t) + \alpha(t).
\]

Using inequality \ref{G:|x|<=G}, there exists $C>0$ such that
\[
C\left|{\bar{\xi}+  \nabla G(x)}\right| \leq  G\left(2bx\right)+  \beta(t) + \alpha(t)+1.
\]
This inequality implies that $|\bar{\xi}|\leq a_1(x)b_1(t)$, where
\[
b_1(t)=C^{-1}(\beta(t) +\alpha(t)+1)+1,\quad\text{and}\quad a_1(x)= C^{-1}G(2bx) +|\nabla G(x)|+1.
\]

Since $|\nabla G(x)|$ is bounded on bounded sets (because $G$ is locally Lipschitz), we conclude that the conditions of \cite[Cor. 4.5]{demaria2025} are satisfied.

\textbf{Final Step:} \emph{$v=-J\tilde{u}$ minimizes the dual action.}

Since $v'_{\epsilon_n} \in \partial  \mathcal{H}_{\epsilon_n}(t,u_{\epsilon_n})$, by Fenchel's equality and taking into account the definition of $\mathcal{H}_{\epsilon_n}$, we have
\begin{eqnarray*}
\chi_{\epsilon_n}(v_{\epsilon_n})&=&
\int_0^T \left[\frac{1}{2}\left<Jv'_{\epsilon_n},v_{\epsilon_n}\right>+\left<u_{\epsilon_n},v'_{\epsilon_n}\right>- \mathcal{H}(t,u_{\epsilon_n})-G(\epsilon_n u_{\epsilon_n})\right]dt.
\end{eqnarray*}

Since $u_{\epsilon_n}$ converges uniformly to $u$, it follows that
\[
\int_0^T \mathcal{H}(t,u_{\epsilon_n}) dt \to \int_0^T \mathcal{H}(t,u) dt
\quad\text{and}\quad
\int_0^T G(\epsilon_n u_{\epsilon_n}) dt \to 0.
\]

On the other hand,
\begin{eqnarray*}
 \left|\int_0^T\left<Jv'_{\epsilon_n},v_{\epsilon_n}\right>-\left<Jv',v\right>dt\right|
&\leq& \left|\int_0^T \left<v'_{\epsilon_n},Jv_{\epsilon_n}-Jv\right>dt\right|
 +\left|\int_0^T \left<v'_{\epsilon_n}-v',Jv\right>dt\right|\\
&\leq& \|v'_{\epsilon_n}\|_{L^{G^\star}} \|\tilde{u}_{\epsilon_n}-\tilde{u}\|_{L^{G}}
+\left|\int_0^T \left<v'_{\epsilon_n}-v',Jv\right>dt\right|.
\end{eqnarray*}

Since $v'_{\epsilon_n}$ converges weak$^\star$ to $v'$ in $L^{G^\star}$ (recall that $Ju'_{\epsilon_n} \overset{\star}{\rightharpoonup} Ju'$), it follows that $\|v'_{\epsilon_n}\|_{L^{G^\star}}$ is uniformly bounded. Consequently,
\[
\int_0^T \left<Jv'_{\epsilon_n},v_{\epsilon_n}\right>dt \to \int_0^T \left<Jv',v\right> dt.
\]

A similar argument (in fact, $\int_0^T \left<u_{\epsilon_n},v'_{\epsilon_n}\right>dt=-\int_0^T\left<Jv'_{\epsilon_n},v_{\epsilon_n}\right>dt$) shows that
\begin{eqnarray*}
 \int_0^T \left<u_{\epsilon_n},v'_{\epsilon_n}\right>dt\to \int_0^T \left<u,v'\right>dt.
\end{eqnarray*}

Using that $v'=-Ju' \in \partial  \mathcal{H}(t,u)$ and Fenchel's equality, we obtain
\begin{eqnarray}
 \lim_{n\to\infty}\chi_{\epsilon_n}(v_{\epsilon_n})
&=&\int_0^T \frac{1}{2}\left<Jv',v\right>+\left<u,v'\right>- \mathcal{H}(t,u)dt\\
 &=& \int_0^T \frac{1}{2}\left<Jv',v\right>+ \mathcal{H}^\star(t,v')dt=\chi(v).
\end{eqnarray}

Furthermore, since $\mathcal{H}^\star_{\epsilon_n}\leq  \mathcal{H}^\star$ for any $w \in W^{1,G^\star}_T([0,T],\mathbb{R}^{2n})$, it holds that
\[
\chi_{\epsilon_n}(v_{\epsilon_n})\leq\chi_{\epsilon_n}(w)\leq\chi(w).
\]
Thus, $v$ is a minimizer of $\chi$. \qed

\section{Acknowledgements}
The research has been supported by SECyT-UNRC, grant C561, FCEyN-UNLPam, grant PI 91. M. and ANPCyT PICT 2019- 3837.

\bibliographystyle{plain}
\bibliography{inclusion}

\end{document}